\documentclass[12pt, twoside, leqno]{article}

\usepackage{amsmath}
\usepackage{amsthm}
\usepackage{amssymb}
\usepackage{enumerate}

\pagestyle{myheadings}
\markboth{A. Swift}{Coarse Lipschitz embeddability}

\newtheorem{theorem}{Theorem}[section]
\newtheorem{lemma}[theorem]{Lemma}
\newtheorem{proposition}[theorem]{Proposition}
\newtheorem{corollary}[theorem]{Corollary}

\newtheorem*{maintheorem}{Theorem \ref{lipschitz}}

\theoremstyle{definition}
\newtheorem{definition}[theorem]{Definition}

\frenchspacing

\textwidth=13.5cm
\textheight=23cm
\parindent=16pt
\oddsidemargin=-0.5cm
\evensidemargin=-0.5cm
\topmargin=-0.5cm

\begin{document}

\baselineskip=17pt

\title{On coarse Lipschitz embeddability\\ into $c_0(\kappa)$}
\author{Andrew Swift \\
E-mail:  ats0@math.tamu.edu  }
\date{}
\maketitle

\renewcommand{\thefootnote}{}

\footnote{2010 \emph{Mathematics Subject Classification}: 46T99.}

\footnote{\emph{Key words and phrases}: coarse Lipschitz, nonlinear embeddings, $c_0$, covers, dimension.}

\renewcommand{\thefootnote}{\arabic{footnote}}
\setcounter{footnote}{0}

\begin{abstract}
In 1994, Jan Pelant proved that a metric property related to the notion of paracompactness called the uniform Stone property characterizes a metric space's uniform embeddability into $c_0(\kappa)$ for some cardinality $\kappa$.  In this paper it is shown that coarse Lipschitz embeddability of a metric space into $c_0(\kappa)$ can be characterized in a similar manner.  It is also shown that coarse, uniform, and bi-Lipschitz embeddability into $c_0(\kappa)$ are equivalent notions for normed linear spaces.
\end{abstract}

\section{Introduction}
Aharoni showed in 1974 \cite{aharoni} that for any $K>6$, every separable metric space $K$-Lipschitz embeds into $c_0^+$ (where the positive cone of $c_0$, denoted $c_0^+$, is the set $\{(x_i)_{i=1}^\infty\in c_0\ |\ x_i\geq 0 \mbox{ for all }i\in \mathbb{N}\}$ with metric inherited from $c_0$); and also that $\ell_1$ does not $K$-Lipschitz embed into $c_0$ for any $K<2$.  In 1978, Assouad \cite{assouad} improved Aharoni's result and showed that for any $K>3$, every separable metric space $K$-Lipschitz embeds into $c_0^+$.  The final improvement for $c_0^+$ came when Pelant showed in 1994 \cite{pelant2} that every separable metric space $3$-Lipschitz embeds into $c_0^+$ and that $\ell_1$ cannot be $K$-Lipschitz embedded into $c_0^+$ for any $K<3$.  This left open the problem of finding the best constant for bi-Lipschitz embedding a separable metric space into $c_0$ until Kalton and Lancien showed in 2008 \cite{kalton2} that every separable metric space $2$-Lipschitz embeds into $c_0$.  They do this by showing that every separable metric space has property $\Pi(2)$, property $\Pi(\lambda)$ being a sufficient criterion they define for implying $\lambda$-Lipschitz embeddability into $c_0$ for a separable metric space.  Recently, Baudier and Deville \cite{baudier} have made a slight improvement to Kalton and Lancien's proof using a related criterion $\pi({\lambda})$ to show that every separable metric space $2$-Lipschitz embeds into $c_0$ via a special kind of bi-Lipschitz embedding.

It is natural to ask whether a similar result holds for non-separable metric spaces.  In particular, does every metric space bi-Lipschitz embed into $c_0(\kappa)$ for large enough cardinality $\kappa$?  The answer to this question comes from the theory of uniform spaces.  In 1948, Stone \cite{stone1} showed that every metric space is paracompact.  In 1960 \cite{stone2}, he asked whether every uniform cover of a metric space has a locally finite uniform refinement (or equivalently a point-finite uniform refinement).  That is, does every metric space possess a uniform analog of paracompactness (a property which has come to be called the uniform Stone property)?  The question was answered in the negative by Pelant \cite{pelant1} and Shchepin \cite{shchepin}, who showed that $\ell_\infty(\Gamma)$ fails to have the uniform Stone property if $\Gamma$ has large enough cardinality.  Moreover, Pelant (see \cite{pelant2}) has shown that the uniform Stone property characterizes uniform embeddability into $c_0(\kappa)$ for some $\kappa$ and thus $\ell_\infty (\Gamma)$ does not even uniformly embed into any $c_0(\kappa)$ when $\Gamma$ has large enough cardinality.

It remains an open problem in the nonlinear theory of Banach spaces to determine whether a Banach space's uniform embeddability into a given Banach space Y is equivalent to its coarse embeddability into Y, and so one is led to ask whether a characterization of coarse embeddability into $c_0(\kappa)$ involving covers also exists.
We suggest a natural candidate for such a ``coarse Stone property", and show this to be at least a necessary condition for coarse embeddability into $c_0(\kappa)$.
Related to this property, however, is a natural modulus $\Delta_X^{(c)}$ which can be defined for any metric space $X$ and whose growth can be used to characterize coarse Lipschitz embeddability (and also bi-Lipschitz embeddability) into $c_0(\kappa)$.  The main result is the following theorem.

\begin{maintheorem}
Let $(X,d_X)$ be a metric space with density character $\kappa$.
If there are $1\leq C<\infty$ and $D<\infty$ such that $\Delta_X^{(c)}(R)\leq CR+D$ for all $R<\infty$; then for any $\lambda>0$, any $K> 2(C+\lambda)$, and any $L>\frac{(C+\lambda)D}{\lambda}$; there exists a coarse Lipschitz embedding $f\colon X\to c_0^+(\kappa)$ such that \[d_X(x,y)-L\leq \|f(x)-f(y)\|_{\infty}\leq Kd_X(x,y)\] for every $x,y\in X$.  If $D=0$, then it is possible to take $L=0$.
\end{maintheorem}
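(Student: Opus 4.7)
The construction follows the Pelant--Kalton--Lancien template, adapted to the coarse setting and parametrised by the modulus $\Delta_X^{(c)}$. I would fix $\lambda>0$ and a scaling factor $s$ with $2(C+\lambda)<s<K$, then choose scales $R_n=(D/\lambda)\cdot 2^n$ for $n\ge 0$ (and $R_n=2^n$ for $n\in\mathbb{Z}$ when $D=0$). For each $n$ the bound $\Delta_X^{(c)}(R_n)\le CR_n+D\le(C+\lambda)R_n$ supplies, via the definition of $\Delta_X^{(c)}$, a point-finite open cover $\mathcal{U}_n$ of $X$ with Lebesgue number at least $R_n$ and mesh at most $(C+\lambda)R_n$; a density-character argument keeps $|\mathcal{U}_n|\le\kappa$, so the aggregate index set $\mathcal{I}=\bigsqcup_n\mathcal{U}_n$ has cardinality $\kappa$. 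For each $(n,U)\in\mathcal{I}$ I would set $\phi_{n,U}(x)=d_X(x,X\setminus U)$: a $1$-Lipschitz nonnegative function supported in $U$ and bounded above by $(C+\lambda)R_n$. The candidate embedding is $f\colon X\to c_0^+(\kappa)$ defined by $f(x)=(s\phi_{n,U}(x))_{(n,U)\in\mathcal{I}}$.

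The upper bound is immediate from the $1$-Lipschitz property of each coordinate: $\|f(x)-f(y)\|_\infty\le s\,d_X(x,y)\le K\,d_X(x,y)$. For the lower bound, suppose $d_X(x,y)>L$, so in particular $d_X(x,y)>(C+\lambda)R_0=(C+\lambda)D/\lambda$, and pick the largest integer $n^\ast\ge 0$ with $(C+\lambda)R_{n^\ast}<d_X(x,y)$; maximality together with the doubling $R_{n^\ast+1}=2R_{n^\ast}$ forces $R_{n^\ast}\ge d_X(x,y)/(2(C+\lambda))$. By the Lebesgue condition at scale $n^\ast$ there is $U\in\mathcal{U}_{n^\ast}$ with $B(x,R_{n^\ast})\subseteq U$, so $\phi_{n^\ast,U}(x)\ge R_{n^\ast}$; the mesh bound $\mathrm{diam}(U)\le(C+\lambda)R_{n^\ast}<d_X(x,y)$ forces $y\notin U$, so $\phi_{n^\ast,U}(y)=0$. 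Therefore $\|f(x)-f(y)\|_\infty\ge sR_{n^\ast}\ge s\cdot d_X(x,y)/(2(C+\lambda))>d_X(x,y)\ge d_X(x,y)-L$. For $d_X(x,y)\le L$ the inequality is vacuous, and when $D=0$ the same argument runs with $L=0$ for every positive distance.

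The main technical obstacle is confirming that $f(x)$ lies in $c_0^+(\kappa)$ rather than only in $\ell_\infty^+(\kappa)$: point-finiteness of each $\mathcal{U}_n$ guarantees finitely many nonzero coordinates of $f(x)$ per scale, but since $(C+\lambda)R_n$ grows with $n$, the tail over large scales needs extra care. I would expect to handle this either by replacing each $\phi_{n,U}$ with a scale-sensitive truncation such as $\min(R_n,d_X(x,X\setminus U))$, so the cap prevents individual coordinates from becoming large, or by passing to the centred map $x\mapsto f(x)-f(x_0)$ and exploiting the $1$-Lipschitz cancellation $|\phi_{n,U}(x)-\phi_{n,U}(x_0)|\le d_X(x,x_0)$ together with the nested structure of the covers to produce decay. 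Either modification preserves the upper and lower estimates already established.
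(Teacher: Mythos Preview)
Your outline follows the same template as the paper's proof (Pelant--Assouad style: one point-finite cover per scale, coordinate functions given by distance-to-complement, then a scale-matching argument for the lower bound). However, there are two genuine gaps.

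\textbf{The $c_0$ issue is not resolved.} Neither of your proposed fixes works. Truncating $\phi_{n,U}$ at $R_n$ caps each coordinate by $sR_n$, but $R_n\to\infty$, so for a fixed $x$ the large-$n$ coordinates need not decay: at every large scale $n$, $x$ lies in some $U\in\mathcal{U}_n$ and $d_X(x,X\setminus U)$ can be of order $R_n$. Centring by subtracting $f(x_0)$ does not help either, since $|\phi_{n,U}(x)-\phi_{n,U}(x_0)|\le d_X(x,x_0)$ is merely bounded, not tending to $0$ over the large-$n$ tail; also, the covers $\mathcal{U}_n$ are chosen independently and have no ``nested structure'' to exploit, and centring would anyway leave $c_0^+(\kappa)$. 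The paper's device, which you are missing, is to fix a basepoint $O$ and replace each $U_{n,\tau}$ by $V_{n,\tau}=U_{n,\tau}\setminus B_{(C-1+\lambda)t^n/2}(O)$ before forming the distance function. Then $f_{n,\tau}$ vanishes on a ball about $O$ whose radius grows with $n$, so for each fixed $x$ only finitely many scales $n$ contribute at all; combined with the truncation $\min\{\,\cdot\,,t^n/2\}$ (which handles the small-$n$ tail when $D=0$) and point-finiteness at each scale, this yields membership in $c_0^+(\kappa)$. The removed ball does not damage the lower-bound argument because, assuming without loss of generality $d_X(x,O)\ge d_X(y,O)$, one checks $B_{t^n/2}(x)\cap B_{(C-1+\lambda)t^n/2}(O)=\emptyset$ at the relevant scale.

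\textbf{A factor-of-two slip in the Lebesgue step, and the doubling ratio.} With the paper's definition, $\mathcal{L}(\mathcal{U}_{n^\ast})\ge R_{n^\ast}$ only guarantees that sets of \emph{diameter} $<R_{n^\ast}$ lie in some $U$; the ball $B(x,R_{n^\ast})$ has diameter up to $2R_{n^\ast}$, so your claim $B(x,R_{n^\ast})\subseteq U$ is unjustified. You only get (for every $\varepsilon>0$) some $U$ with $B_{(1-\varepsilon)R_{n^\ast}/2}(x)\subseteq U$, hence $\phi_{n^\ast,U}(x)\gtrsim R_{n^\ast}/2$. Feeding this through your estimate with $R_{n^\ast}\ge d_X(x,y)/(2(C+\lambda))$ gives only $\|f(x)-f(y)\|_\infty\ge s\,d_X(x,y)/(4(C+\lambda))$, which yields the theorem only for $K>4(C+\lambda)$. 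To reach $K>2(C+\lambda)$ you must replace the doubling scales $R_n=2^n\cdot(\text{const})$ by $t^n$ with arbitrary $t>1$ and let $t\to 1^+$ (together with $\varepsilon\to 0^+$), exactly as the paper does with $K=\dfrac{2t(C+\lambda)}{1-\varepsilon}$.
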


\section{Preliminaries and notation}
Let $(X,d_X)$ and $(Y,d_Y)$ be metric spaces.
Given $x\in X$ and $r\in [0,\infty)$, we will denote by $B_r(x)$ the open ball of radius $r$ centered at $x$.
For a map $f\colon X\to Y$, the \emph{modulus of continuity} (or \emph{modulus of expansion}) of $f$ is the function $\omega_f\colon [0,\infty )\to [0,\infty]$ defined by \[\omega_f(t)=\sup \{d_Y(f(x_1),f(x_2))\ |\ d_X(x_1,x_2)\leq t\},\]
and the \emph{modulus of compression} of $f$ is the function $\rho_f\colon [0,\infty )\to [0,\infty]$ defined by \[\rho_f(t)=\inf \{d_Y(f(x_1),f(x_2))\ |\ d_X(x_1,x_2)\geq t\}.\]
Note that $\omega_f$ and $\rho_f$ are non-decreasing and for all $x_1,x_2\in X$,
\[\rho_f(d_X(x_1,x_2))
\leq d_Y(f(x_1),f(x_2))
\leq \omega_f(d_X(x_1,x_2)).\]
A map $f$ is said to be \emph{uniformly continuous} (or simply \emph{uniform}) if $\lim\limits_{t\to 0}\omega_f(t)=0$ and is called a \emph{uniform embedding} if furthermore $\rho_f(t)>0$ for all $t>0$.
A map $f$ is said to be \emph{coarse} (or sometimes \emph{coarsely continuous}) if $\omega_f(t)<\infty$ for all $t\in [0,\infty)$ and is called a \emph{coarse embedding} if furthermore $\lim\limits_{t\to \infty}\rho_f(t)=\infty$.
A map $f$ is called a \emph{coarse Lipschitz embedding} (or a \emph{quasi-isometric embedding}, especially in the literature of geometric group theory) if there exist $A\geq 1$ and $B\geq 0$ such that $\omega_f(t)\leq At+B$ and $\rho_f(t)\geq \frac{1}{A}t-B$ for all $t$ and is called a \emph{bi-Lipschitz embedding} if furthermore $B$ can be taken to be equal to $0$.
The \emph{Lipschitz constant} of $f$ is defined to be
\[\mathrm{Lip}(f)=\sup\left\{\frac{d_Y(f(x_1),f(x_2))}{d_X(x_1,x_2)}\ |\ x_1\neq x_2\right\}.\]
A map $f$ is said to be \emph{Lipschitz} if $\mathrm{Lip}(f)<\infty$.
If $f$ is injective, the \emph{distortion} of $f$ is defined to be $\mathrm{dist}(f)=\mathrm{Lip}(f)\cdot \mathrm{Lip}(f^{-1})$.
If $\mathrm{dist}(f)\leq K$, then $f$ is called a $K$-Lipschitz embedding.

Given $a,b\in \mathbb{R}^+$, $S\subseteq X$ is called \emph{$a$-separated} if $d_X(s_1,s_2)\geq a$ for all $s_1,s_2\in S$, is called \emph{$b$-dense} in $X$ if $d_X(x,S)\leq b$ for all $x\in X$, and is called an \emph{$(a,b)$-skeleton} of $X$ if it is $a$-separated and $b$-dense in $X$.  Given a skeleton $S$ of $X$, there is a coarse embedding $f\colon X\to S$ such that $\sup \{d_X(f(x),S)\}_{x\in X}<\infty$ (just map every point of the space to a nearest point in the skeleton), and so questions about coarse embeddings of metric spaces can be reduced to questions about coarse embeddings of uniformly discrete metric spaces.  By Zorn's Lemma, every $a$-separated set can be extended to a maximal (in the sense of set containment) $(a,a)$-skeleton of $X$.  Note that $|S|\leq \mathrm{dens}(X)$ (where $|S|$ denotes the cardinality of $S$ and where $\mathrm{dens}(X)$, the \emph{density character} of $X$, is the smallest cardinality of a set dense in $X$) for any skeleton $S$ of $X$.  And if $X$ is a normed linear space, then $X=\overline{\mathrm{span}(S)}$ (the closed linear span of $S$) for any skeleton of $X$ (or else $S$ is not $b$-dense in $X$ for any $b\in \mathbb{R}^+$), and so in this case $|S|=\mathrm{dens}(X)$.  The following lemma holds.

\begin{lemma}
\label{prelim}
Let $(X, \|\cdot\|_X)$ be a normed linear space and $(Y,d_Y)$ a metric space.  If there exists a map $f\colon X\to Y$ such that $\lim_{t\to\infty}\rho_f(t)=\infty$, then $\mathrm{dens}(X)\leq \mathrm{dens}(Y)$.
\end{lemma}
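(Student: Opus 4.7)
The plan is to reduce the statement to the elementary fact that in any metric space, an $\varepsilon$-separated subset has cardinality at most the density character of the space, and then to combine this with the observation (already noted in the preliminaries) that any skeleton of a normed linear space $X$ has cardinality equal to $\mathrm{dens}(X)$.

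First, since $\lim_{t\to\infty}\rho_f(t)=\infty$, there is some $t_0>0$ with $\rho_f(t_0)>0$; set $\varepsilon=\rho_f(t_0)$. By the definition of $\rho_f$, for any two points $x_1,x_2\in X$ with $d_X(x_1,x_2)\geq t_0$ we have $d_Y(f(x_1),f(x_2))\geq \varepsilon$. Next, invoke Zorn's Lemma (as described just before the lemma) to produce a maximal $(t_0,t_0)$-skeleton $S$ of $X$. Since $X$ is a normed linear space, $|S|=\mathrm{dens}(X)$. The points of $S$ are pairwise at distance at least $t_0$, so their images form a $\varepsilon$-separated subset of $Y$; in particular $f|_S$ is injective and $|f(S)|=|S|=\mathrm{dens}(X)$.

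It therefore suffices to show that any $\varepsilon$-separated subset $T\subseteq Y$ satisfies $|T|\leq \mathrm{dens}(Y)$. Pick a dense subset $D\subseteq Y$ with $|D|=\mathrm{dens}(Y)$; for each $y\in T$, choose some $\varphi(y)\in D$ with $d_Y(y,\varphi(y))<\varepsilon/2$. The map $\varphi\colon T\to D$ is injective by the triangle inequality (if $\varphi(y_1)=\varphi(y_2)$ then $d_Y(y_1,y_2)<\varepsilon$, forcing $y_1=y_2$), so $|T|\leq |D|=\mathrm{dens}(Y)$. Applying this to $T=f(S)$ yields $\mathrm{dens}(X)=|f(S)|\leq \mathrm{dens}(Y)$.

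There is no real obstacle here; the only subtlety is ensuring $|S|=\mathrm{dens}(X)$, which is precisely where the hypothesis that $X$ be a normed linear space enters, via the remark in the preliminaries that $X=\overline{\mathrm{span}(S)}$ for any skeleton. Without linearity one would only get $|S|\leq \mathrm{dens}(X)$, which would not be enough.
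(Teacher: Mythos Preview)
Your proof is correct and follows essentially the same approach as the paper's: choose $a>0$ with $\rho_f(a)>0$, take an $(a,a)$-skeleton $S$ of $X$, observe that $f$ maps $S$ injectively onto a $\rho_f(a)$-separated subset of $Y$, and conclude $\mathrm{dens}(X)=|S|=|f(S)|\leq \mathrm{dens}(Y)$. The only difference is that you spell out the elementary fact that an $\varepsilon$-separated set has cardinality at most the density character, which the paper uses without comment.
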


\begin{proof}
Let $a>0$ be such that $\rho_f(a)>0$, and let $S$ be an $(a,a)$-skeleton of $X$.
Then $f|_S$ is injective and maps $S$ to a $\rho_f(a)$-separated subset of $Y$.
And so  
\[\mathrm{dens}(X)= |S|=|f(S)|\leq \mathrm{dens}(Y).\qedhere\]
\end{proof}

\

A family of sets $\mathcal{U}\subseteq \mathcal{P}(X)$ (where $\mathcal{P}(X)$ denotes the power set of $X$) is called a \emph{cover} of $X$ if $\bigcup\limits_{U\in \mathcal{U}}U=X$.  
The \emph{diameter} of a cover $\mathcal{U}$ of $X$ is \[\mathrm{diam}(\mathcal{U})=\sup\{\mathrm{diam}(U)\ |\ U\in \mathcal{U}\}\]
where for $U\subseteq X$, $\mathrm{diam}(U)=\sup \{d_X(x_1,x_2)\ |\ x_1,x_2\in U\}$ is the \emph{diameter} of $U$.  
The \emph{Lebesgue number} of a cover $\mathcal{U}$ of $X$ is 
\begin{align*}
 \mathcal{L}(\mathcal{U})=\sup \{d\in [0,\infty)\ |\ & \mbox{For every }E\subseteq X\mbox{ such that } \mathrm{diam}(E)<d,\\
 & \mbox{there is }U\in \mathcal{U}\mbox{ such that } E\subseteq U\}.
\end{align*}
Note that by definition $\mathcal{L}(\{X\})=\infty$ and $\mathcal{L}(\{\{x\}\}_{x\in X})=\inf_{x\neq y}d_X(x,y)$.  
A cover $\mathcal{U}$ of $X$ is called a \emph{uniform} cover if $\mathcal{L}(\mathcal{U})>0$ and is called a \emph{uniformly bounded} (or \emph{coarse}) cover if $\mathrm{diam}(\mathcal{U})<\infty$.  
A cover $\mathcal{U}$ of $X$ is called \emph{point-finite} if for all $x\in X$, there are only finitely many $U\in \mathcal{U}$ such that $x\in U$.  
A cover $\mathcal{V}$ of $X$ is called a \emph{refinement} of the cover $\mathcal{U}$ of $X$; and in this case, $\mathcal{V}$ is said to \emph{refine} $\mathcal{U}$; if for all $V\in \mathcal{V}$, there is $U\in \mathcal{U}$ such that $V\subseteq U$.
We have the following lemma.

\begin{lemma}
\label{covercardinality}
Let $(X,d_X)$ be a metric space with density character $\kappa$, and let $\mathcal{U}$ be a point-finite uniform cover of $X$.  There exists $\mathcal{V}\subseteq\mathcal{U}$ such that $|\mathcal{V}|\leq \kappa$ and such that $\mathcal{V}$ is a point-finite cover of $X$ with $\mathcal{L}(\mathcal{V})= \mathcal{L}(\mathcal{U})$.
\end{lemma}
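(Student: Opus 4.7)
The plan is to harvest only those members of $\mathcal{U}$ needed to cover a dense subset of $X$. I would fix a dense set $D \subseteq X$ with $|D| \leq \kappa$ and, for each $d \in D$, form the finite (by point-finiteness of $\mathcal{U}$) collection $\mathcal{U}_d := \{U \in \mathcal{U} : d \in U\}$. Setting $\mathcal{V} := \bigcup_{d \in D} \mathcal{U}_d$ then gives a subfamily of $\mathcal{U}$ that is automatically point-finite, with $|\mathcal{V}| \leq \aleph_0 \cdot |D| \leq \kappa$ in the infinite case (if $\kappa$ is finite, then $X$ itself is finite and the claim is immediate by passing to any finite subcover).

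The substantive step is the equality of Lebesgue numbers. The inequality $\mathcal{L}(\mathcal{V}) \leq \mathcal{L}(\mathcal{U})$ is automatic from $\mathcal{V} \subseteq \mathcal{U}$, so the task is to prove $\mathcal{L}(\mathcal{V}) \geq \mathcal{L}(\mathcal{U})$. Given any $E \subseteq X$ with $\mathrm{diam}(E) < \mathcal{L}(\mathcal{U})$, I would pick $\delta > 0$ small enough that $\mathrm{diam}(E) + 2\delta < \mathcal{L}(\mathcal{U})$ and replace $E$ by its fattening
\[E' := \bigcup_{x \in E} B_\delta(x),\]
whose diameter is at most $\mathrm{diam}(E) + 2\delta$ and so still strictly less than $\mathcal{L}(\mathcal{U})$. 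The defining property of $\mathcal{L}(\mathcal{U})$ then produces $U \in \mathcal{U}$ with $E' \subseteq U$, and density of $D$ yields some $d \in D \cap E'$; hence $U \in \mathcal{U}_d \subseteq \mathcal{V}$, while of course $E \subseteq E' \subseteq U$. Letting $\delta \downarrow 0$ gives $\mathcal{L}(\mathcal{V}) \geq \mathcal{L}(\mathcal{U})$, and since $\mathcal{L}(\mathcal{V}) > 0$ forces every singleton (hence every point of $X$) to lie in some member of $\mathcal{V}$, the family $\mathcal{V}$ is in fact a cover.

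The one place I expect care to be needed is exactly this fattening trick. A naive attempt, such as picking for each $d \in D$ just one single $U \in \mathcal{U}$ containing $d$, would fail because a small test set $E$ might lie inside some $U \in \mathcal{U}$ that happens to avoid $D$ altogether, so that $U$ is never selected into $\mathcal{V}$. Enlarging $E$ by $\delta$ before invoking the Lebesgue property of $\mathcal{U}$ is precisely what forces the enclosing $U$ to capture a point of $D$ and thereby survive into $\mathcal{V}$, which is what makes the Lebesgue number equality (rather than a mere lower bound) go through.
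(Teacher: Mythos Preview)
Your proposal is correct and follows essentially the same route as the paper: define $\mathcal{V}$ to be the set of all $U\in\mathcal{U}$ that meet a fixed dense set, then use a fattening trick to show $\mathcal{L}(\mathcal{V})\geq\mathcal{L}(\mathcal{U})$. The only cosmetic differences are that the paper fattens $A$ by adjoining a single ball $B_r(x)$ at one point $x\in A$ rather than taking a full $\delta$-neighborhood, and that your phrase ``letting $\delta\downarrow 0$'' is unnecessary---a single suitable $\delta$ already shows every $E$ with $\mathrm{diam}(E)<\mathcal{L}(\mathcal{U})$ lies in some member of $\mathcal{V}$, which is exactly the definition of $\mathcal{L}(\mathcal{V})\geq\mathcal{L}(\mathcal{U})$.
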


\begin{proof}
Let $\{x_\tau\}_{\tau<\kappa}$ be a dense set in $X$ and let 
\[\mathcal{V}=\{U\in \mathcal{U}\ |\ x_\tau\in U \mbox{ for some } \tau<\kappa\}.\]
Then $|\mathcal{V}|\leq \kappa$ since $\mathcal{U}$ is point-finite.
Now take any $A\subseteq X$ such that $\mathrm{diam}(A)<\mathcal{L}(\mathcal{U})$.
If $A=\emptyset$, then clearly there is $V\in \mathcal{V}$ such that $A\subseteq V$, so suppose $A\neq \emptyset$ and let $x\in A$.
Choose any $0<r<\mathcal{L}(\mathcal{U})-\mathrm{diam}(A)$ and let $B=A\cup B_r(x)$.
Then $\mathrm{diam}(B)<\mathcal{L}(\mathcal{U})$, and so there is $U\in \mathcal{U}$ such that $B\subseteq U$.
But there is $\tau<\kappa$ such that $x_\tau \in B_r(x)\subseteq B\subseteq U$ by the density of $\{x_\tau\}_{\tau<\kappa}$, and so $U\in \mathcal{V}$.
Therefore $\mathcal{V}$ is a cover of $X$ such that $\mathcal{L}(\mathcal{V})\geq \mathcal{L}(\mathcal{U})$.
Furthermore, $\mathcal{V}$ is point-finite and $\mathcal{L}(\mathcal{V})=\mathcal{L}(\mathcal{U})$ because $\mathcal{V}\subseteq \mathcal{U}$.
\end{proof}
\section{Characterizing embeddability}

We start by defining the uniform Stone property, which characterizes a metric space's uniform embeddability into some $c_0(\kappa)$.  One can view the property as a generalization of having finite (uniform) covering dimension, which is the natural notion of dimension associated with the class of uniform spaces.

\begin{definition}
\label{usp}
A metric space $(X,d_X)$ is said to have the \emph{uniform Stone property} if every uniform cover of $X$ has a point-finite uniform refinement.
\end{definition}

The class of coarse spaces has a similar notion of dimension associated with it, called asymptotic dimension.  It has become clear in recent years that many ideas in the uniform theory have useful analogues in the coarse theory, and so the motivation behind the following definition is to generalize the property of having finite asymptotic dimension in a manner similar to the way the uniform Stone property generalizes having finite covering dimension.

\begin{definition}
\label{csp}
A metric space $(X,d_X)$ is said to have the \emph{coarse Stone property} if every uniformly bounded cover of $X$ refines a point-finite uniformly bounded cover.
\end{definition}

We immediately turn to more quantitative formulations.  
Given a metric space $(X,d_X)$, define the functions $\Delta_X^{(u)},\Delta_X^{(c)}\colon$ $[0,\infty )\to [0,\infty]$ by 
\[\Delta_X^{(u)}(r)  = \sup \{\mathcal{L}(\mathcal{U})\ |\ \mathcal{U}\mbox{ is a point-finite cover of } X \mbox{ and } \mathrm{diam}(\mathcal{U})\leq r\}\]
and \[\Delta_X^{(c)}(R)  =  \inf \{\mathrm{diam}(\mathcal{U})\  |\ \mathcal{U}\mbox{ is a point-finite cover of } X \mbox{ and } \mathcal{L}(\mathcal{U})\geq R\}.\]

\begin{proposition}
Let $(X,d_X)$ be a metric space.
\begin{enumerate}[(i)]
\item $X$ has the uniform Stone property if and only if $\Delta_X^{(u)}(r)>0$ for all $r>0$.
\item $X$ has the coarse Stone Property if and only if $\Delta_X^{(c)}(R)<\infty$ for all $R\in [0,\infty)$.  
\end{enumerate}
\end{proposition}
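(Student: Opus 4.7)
The plan is to reduce both equivalences to a single elementary principle: \emph{if $\mathcal{V}$ and $\mathcal{U}$ are covers of $X$ with $\mathrm{diam}(\mathcal{V}) < \mathcal{L}(\mathcal{U})$, then $\mathcal{V}$ refines $\mathcal{U}$.} This follows immediately from the definition of the Lebesgue number and will serve as the bridge between the qualitative cover properties and their quantitative moduli in both directions of both parts.

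For part (i), in the forward direction I would fix $r > 0$ and apply the uniform Stone property to the concrete uniform cover $\{B_{r/2}(x) : x \in X\}$, which has diameter at most $r$ and Lebesgue number at least $r/2$. A point-finite uniform refinement $\mathcal{V}$ satisfies $\mathrm{diam}(\mathcal{V}) \le r$ and $\mathcal{L}(\mathcal{V}) > 0$, giving $\Delta_X^{(u)}(r) > 0$. For the reverse direction, given a uniform cover $\mathcal{U}$ with $\mathcal{L}(\mathcal{U}) = L > 0$, I would pick any $r \in (0, L)$; the positivity of $\Delta_X^{(u)}(r)$ yields a point-finite cover $\mathcal{V}$ with positive Lebesgue number and $\mathrm{diam}(\mathcal{V}) \le r < L$, and the bridge principle shows that $\mathcal{V}$ is a point-finite uniform refinement of $\mathcal{U}$.

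Part (ii) is handled dually. In the forward direction, given $R \ge 0$, the ball cover $\{B_R(x) : x \in X\}$ is uniformly bounded with Lebesgue number at least $R$; the coarse Stone property produces a point-finite uniformly bounded cover $\mathcal{V}$ that this ball cover refines, so any $E$ with $\mathrm{diam}(E) < R$ lies, after picking $x \in E$, in some $B_R(x) \subseteq V \in \mathcal{V}$, whence $\mathcal{L}(\mathcal{V}) \ge R$ and $\Delta_X^{(c)}(R) \le \mathrm{diam}(\mathcal{V}) < \infty$. Conversely, given a uniformly bounded cover $\mathcal{U}$ with $\mathrm{diam}(\mathcal{U}) = D$, I would choose $R > D$; the finiteness of $\Delta_X^{(c)}(R)$ supplies a point-finite cover $\mathcal{V}$ of finite diameter with $\mathcal{L}(\mathcal{V}) \ge R > D$, and the bridge principle yields that $\mathcal{U}$ refines $\mathcal{V}$. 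The only mildly delicate point is the strict-versus-non-strict inequality in the Lebesgue number definition, which is handled by inserting the auxiliary parameters with strict gaps ($r < L$ in part (i) and $R > D$ in part (ii)); I do not anticipate any deeper obstacle, as the proposition is essentially an unpacking of definitions.
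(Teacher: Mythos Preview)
Your proposal is correct and follows essentially the same approach as the paper: both use the ball covers $\{B_{r/2}(x)\}_{x\in X}$ and $\{B_R(x)\}_{x\in X}$ for the forward directions and, for the converses, choose an auxiliary parameter strictly between $\mathrm{diam}$ and $\mathcal{L}$ to force the refinement. Your explicit ``bridge principle'' is exactly what the paper uses implicitly, and your handling of the strict/non-strict inequalities matches the paper's.
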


\begin{proof}
(i):  Suppose first that $X$ has the uniform Stone property, and take any $r>0$.
 Let $\mathcal{U}=\{B_{r/2}(x)\}_{x\in X}$, and note that $\mathcal{U}$ is a uniform cover of $X$ with $\mathrm{diam}(\mathcal{U})\leq r$.
By assumption, $\mathcal{U}$ has a point-finite uniform refinement $\mathcal{V}$, and so $\Delta_X^{(u)}(r)\geq \mathcal{L}(\mathcal{V})>0$.  Conversely, suppose $\Delta_X^{(u)}(r)>0$ for all $r>0$, and take any uniform cover $\mathcal{U}$ of $X$.
Since $\mathcal{U}$ is uniform, there is $r>0$ such that $\mathcal{L}(\mathcal{U})>r$.
And by assumption, there is a point-finite cover $\mathcal{V}$ of $X$ such that $0<\mathcal{L}(\mathcal{V})\leq \Delta_X^{(u)}(r)$ and $\mathrm{diam}(\mathcal{V})\leq r$.  But then $\mathcal{V}$ is a point-finite uniform refinement of $\mathcal{U}$, and so $X$ has the uniform Stone property.\newline
(ii):  Suppose first that $X$ has the coarse Stone property, and take any $R\in [0,\infty)$.
 Let $\mathcal{U}=\{B_{R}(x)\}_{x\in X}$, and note that $\mathcal{U}$ is a uniformly bounded cover of $X$ with $\mathcal{L}(\mathcal{U})\geq R$.
By assumption, $\mathcal{U}$ refines a point-finite uniformly bounded cover $\mathcal{V}$, and so $\Delta_X^{(c)}(R)\leq \mathrm{diam}(\mathcal{V})<\infty$.  Conversely, suppose $\Delta_X^{(c)}(R)<\infty$ for all $R\in[0,\infty)$, and take any uniformly bounded cover $\mathcal{U}$ of $X$.
Since $\mathcal{U}$ is uniformly bounded, there is $R\in [0,\infty)$ such that $\mathrm{diam}(\mathcal{U})<R$.
And by assumption, there is a point-finite cover $\mathcal{V}$ of $X$ such that $\Delta_X^{(c)}(R)\leq\mathrm{diam}(\mathcal{V})<\infty $ and $\mathcal{L}(\mathcal{V})\geq R$.  But then $\mathcal{V}$ is a point-finite uniformly bounded cover refined by $\mathcal{U}$, and so $X$ has the coarse Stone property.
\end{proof}

From this point forward, whenever we write ``uniform Stone property'' or ``coarse Stone property'', we are using the equivalent formulations of these properties in terms of $\Delta_X^{(u)}$ and $\Delta_X^{(c)}$, respectively.  We have the following lemma.

\begin{lemma}
\label{easy}
Let $(X,d_X)$ be a metric space and let $r,R\in (0,\infty)$.
\begin{enumerate}[(i)]
\item $\Delta_X^{(c)}$, $\Delta_X^{(u)}$ are non-decreasing functions.
\item If $\Delta_X^{(c)}(R)<\infty$, then $\Delta_X^{(u)}(\Delta_X^{(c)}(R)+\varepsilon)\geq R$ for all $\varepsilon>0$.
\item  If $\Delta_X^{(u)}(r)>0$, then $\Delta_X^{(c)}(\Delta_X^{(u)}(r)-\varepsilon) \leq r$ for all $0<\varepsilon<\Delta_X^{(u)}(r)$.
\item $X$ has the uniform Stone property if and only if $\lim\limits_{R\to 0}\Delta_X^{(c)}(R)=0$.
\item $X$ has the coarse Stone property if and only if $\lim\limits_{r\to \infty}\Delta_X^{(u)}(r)=\infty$.

\end{enumerate}
\end{lemma}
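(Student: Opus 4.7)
My plan is to take the five parts in order: (i) is a pure monotonicity observation, (ii) and (iii) are dual statements that simply unpack the definitions of infimum and supremum, and (iv) and (v) then follow formally from (i)--(iii).

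For (i), I would observe that increasing $r$ enlarges the family of point-finite covers $\mathcal{U}$ satisfying $\mathrm{diam}(\mathcal{U})\leq r$, so the supremum defining $\Delta_X^{(u)}$ can only grow; symmetrically, increasing $R$ shrinks the family of point-finite covers satisfying $\mathcal{L}(\mathcal{U})\geq R$, so the infimum defining $\Delta_X^{(c)}$ can only grow.

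For (ii), given $\Delta_X^{(c)}(R)<\infty$ and $\varepsilon>0$, the definition of infimum produces a point-finite cover $\mathcal{U}$ with $\mathcal{L}(\mathcal{U})\geq R$ and $\mathrm{diam}(\mathcal{U})<\Delta_X^{(c)}(R)+\varepsilon$. Then $\mathcal{U}$ is a competitor in the supremum defining $\Delta_X^{(u)}(\Delta_X^{(c)}(R)+\varepsilon)$, so that supremum is at least $\mathcal{L}(\mathcal{U})\geq R$. The proof of (iii) is entirely parallel: select a point-finite cover $\mathcal{U}$ from the supremum defining $\Delta_X^{(u)}(r)$ with $\mathrm{diam}(\mathcal{U})\leq r$ and $\mathcal{L}(\mathcal{U})>\Delta_X^{(u)}(r)-\varepsilon$, and read off the conclusion from the infimum defining $\Delta_X^{(c)}(\Delta_X^{(u)}(r)-\varepsilon)$.

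For (iv), recall from the preceding proposition that the uniform Stone property is equivalent to $\Delta_X^{(u)}(r)>0$ for all $r>0$. Assuming this, for any $\delta>0$ apply (iii) with $r=\delta$ and $\varepsilon=\Delta_X^{(u)}(\delta)/2$ to obtain $\Delta_X^{(c)}(\Delta_X^{(u)}(\delta)/2)\leq\delta$; combined with the monotonicity from (i), this forces $\limsup_{R\to 0^+}\Delta_X^{(c)}(R)\leq\delta$, and since $\delta$ was arbitrary the limit is $0$. Conversely, if $\lim_{R\to 0^+}\Delta_X^{(c)}(R)=0$, then for any $r>0$ some $R>0$ satisfies $\Delta_X^{(c)}(R)<r$; choosing $\varepsilon>0$ with $\Delta_X^{(c)}(R)+\varepsilon\leq r$ and applying (ii) followed by monotonicity gives $\Delta_X^{(u)}(r)\geq\Delta_X^{(u)}(\Delta_X^{(c)}(R)+\varepsilon)\geq R>0$. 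Part (v) proceeds in mirror fashion: in the forward direction I use (ii) together with monotonicity to push $\Delta_X^{(u)}(r)$ above any prescribed bound $M$ once $r$ exceeds $\Delta_X^{(c)}(M)+\varepsilon$; in the reverse direction I fix $R$, choose $r$ with $\Delta_X^{(u)}(r)>R$, and apply (iii) with $\varepsilon=\Delta_X^{(u)}(r)-R$ to conclude $\Delta_X^{(c)}(R)\leq r<\infty$.

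I do not anticipate a real obstacle here: the entire lemma is a bookkeeping exercise in reading off the definitions of $\Delta_X^{(u)}$ and $\Delta_X^{(c)}$, and once the duality between (ii) and (iii) is set up, the equivalences in (iv) and (v) are immediate from monotonicity. The only minor care needed is tracking strict versus non-strict inequalities when converting ``$<\Delta+\varepsilon$'' into ``$\leq\Delta+\varepsilon$'' to fit the definitions, which is handled by shrinking $\varepsilon$ when necessary.
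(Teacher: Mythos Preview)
Your proposal is correct and follows essentially the same approach as the paper: unpack the definitions for (i)--(iii), then combine monotonicity with (ii) and (iii) to derive (iv) and (v). The only cosmetic difference is that the paper phrases (iv) and (v) slightly more tersely (e.g.\ ``$\Delta_X^{(c)}(R)\leq\varepsilon$ for any $R<\Delta_X^{(u)}(\varepsilon)$'' rather than picking a specific $\varepsilon=\Delta_X^{(u)}(\delta)/2$), but the logic is identical.
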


\begin{proof}
(i):  This is clear from the definitions.\newline
(ii):  If $\Delta_X^{(c)}(R)<\infty$, then there is a point-finite cover $\mathcal{U}$ of $X$ such that $\mathrm{diam}(\mathcal{U})\leq\Delta_X^{(c)}(R)+\varepsilon$ and $\mathcal{L}(\mathcal{U})\geq R$.  Thus $\Delta_X^{(u)}(\Delta_X^{(c)}(R)+\varepsilon)\geq \mathcal{L}(\mathcal{U})\geq R$.\newline
(iii):  If $\Delta_X^{(u)}(r)>0$, then there is a point-finite cover $\mathcal{U}$ of $X$ such that $\mathcal{L}(\mathcal{U})\geq \Delta_X^{(u)}(r)-\varepsilon$ and $\mathrm{diam}(\mathcal{U})\leq r$.  Thus $\Delta_X^{(c)}(\Delta_X^{(u)}(r)-\varepsilon) \leq \mathrm{diam}(\mathcal{U})\leq r$.\newline
(iv):  Suppose first that $X$ has the uniform Stone property and take any $\varepsilon>0$.  Then by assumption, $\Delta_X^{(u)}(\varepsilon)>0$, and so $\Delta_X^{(c)}(R)\leq \varepsilon$ for any $R<\Delta_X^{(u)}(\varepsilon)$ by parts (i) and (iii).  Thus $\lim_{R\to 0}\Delta_X^{(c)}(R)=0$.
Conversely, suppose $\lim_{R\to 0}\Delta_X^{(c)}(R)=0$ and take any $r>0$.
Let $R>0$ be such that $\Delta_X^{(c)}(R)<r$.  Then $\Delta_X^{(u)}(r)\geq R>0$ by part (ii), and so $X$ has the uniform Stone property.\newline
(v):  Suppose first that $X$ has the coarse Stone property and take any $N\in \mathbb{N}$.  Then by assumption, $\Delta_X^{(c)}(N)<\infty$, and so $\Delta_X^{(u)}(r)\geq N$ for any $r>\Delta_X^{(c)}(N)$ by parts (i) and (ii).  Thus $\lim_{r\to \infty}\Delta_X^{(u)}(r)=\infty$.
Conversely, suppose $\lim_{r\to \infty}\Delta_X^{(u)}(r)=\infty$ and take any $R\in [0,\infty)$.
Let $r\in [0,\infty)$ be such that $\Delta_X^{(u)}(r)>R$.  Then $\Delta_X^{(c)}(R)\leq r<\infty$ by part (iii), and so $X$ has the coarse Stone property.
\end{proof}

It is clear that if $X$ is a metric space with finite diameter, then there are $C<1$ and $D\in[0,\infty)$ such that $\Delta_X^{(c)}(R)\leq CD+R$ for all $R\in[0,\infty)$ (indeed, $\Delta_X^{(c)}(R)\leq \mathrm{diam}(X)$ for all $R\in[0,\infty)$ in this case).  The converse is also true.

\begin{lemma}
\label{C<1}
Let $(X,d_X)$ be a metric space.
If $C<1$ and $D\in[0,\infty)$ are such that $\Delta_X^{(c)}(R)\leq CR+D$ for all $R\in[0,\infty)$, then $\mathrm{diam}(X)\leq\frac{D}{1-C}$.
\end{lemma}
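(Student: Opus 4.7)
The plan is to argue by contradiction. Suppose there exist $x,y\in X$ with $\alpha:=d_X(x,y)>\frac{D}{1-C}$; I will produce a point-finite cover $\mathcal{U}$ in which, on the one hand, the Lebesgue number forces $\{x,y\}$ to lie inside a single element of $\mathcal{U}$, yet, on the other hand, the diameter bound forces every element of $\mathcal{U}$ to have diameter strictly less than $\alpha$. The inequality $\alpha\leq \mathrm{diam}(U)$ will then clash with $\mathrm{diam}(U)\leq\mathrm{diam}(\mathcal{U})<\alpha$, giving the contradiction.

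The mechanism is to choose a scale $R$ with two conflicting requirements: $R>\alpha$ (so that the pair $\{x,y\}$, of diameter $\alpha$, is small enough to sit in a single cover element once $\mathcal{L}(\mathcal{U})\geq R$) and $CR+D<\alpha$ (so that the diameter bound supplied by the hypothesis beats $\alpha$). The second condition rearranges to $R<(\alpha-D)/C$ when $C>0$, and the interval $(\alpha,(\alpha-D)/C)$ is nonempty precisely when $\alpha(1-C)>D$, i.e., when $\alpha>\frac{D}{1-C}$---exactly what we are assuming. The case $C=0$ is even easier, since then we need only $R>\alpha$ and $D<\alpha$. This small algebraic observation, which depends essentially on $C<1$, is the heart of the argument; everything else is bookkeeping.

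Having fixed such an $R$, I would pick $\varepsilon>0$ small enough that $CR+D+\varepsilon<\alpha$. Because $\Delta_X^{(c)}(R)\leq CR+D$ and $\Delta_X^{(c)}(R)$ is an infimum, there is a point-finite cover $\mathcal{U}$ of $X$ with $\mathcal{L}(\mathcal{U})\geq R$ and $\mathrm{diam}(\mathcal{U})<CR+D+\varepsilon<\alpha$. Since $d_X(x,y)=\alpha<R\leq\mathcal{L}(\mathcal{U})$, the defining property of the Lebesgue number yields $U\in\mathcal{U}$ with $\{x,y\}\subseteq U$, hence $\alpha\leq\mathrm{diam}(U)\leq\mathrm{diam}(\mathcal{U})<\alpha$, a contradiction. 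Therefore $d_X(x,y)\leq\frac{D}{1-C}$ for every $x,y\in X$, and the bound on $\mathrm{diam}(X)$ follows.
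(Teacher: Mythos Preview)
Your argument is correct and follows essentially the same contradiction strategy as the paper: choose a scale $R$ so that a point-finite cover with $\mathcal{L}(\mathcal{U})\geq R$ must simultaneously swallow $\{x,y\}$ and have diameter strictly less than $d_X(x,y)$. The only cosmetic difference is that the paper first fixes $0<\varepsilon<1-C$, proves $d_X(x,y)\leq \frac{D}{1-C-\varepsilon}$, and then lets $\varepsilon\to 0$, whereas you assume $d_X(x,y)>\frac{D}{1-C}$ outright and pick $R$ in the nonempty interval $(\alpha,(\alpha-D)/C)$; your version thereby avoids the final limiting step but is otherwise the same proof.
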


\begin{proof}  
Take any $0<\varepsilon< 1-C$.  
Then for any $R>\frac{D}{1-C-\varepsilon}$, 
\[
\Delta_X^{(c)}(R)  \leq  CR+D  <  CR+(1-C-\varepsilon)R  =  (1-\varepsilon)R.
\]
So suppose there exist $x,y\in X$ such that $d_X(x,y)>\frac{D}{1-C-\varepsilon}$.
Then
\[
\Delta_X^{(c)}((1+\varepsilon)d_X(x,y))  <  (1-\varepsilon)((1+\varepsilon)d_X(x,y)) =  (1-\varepsilon^2)d_X(x,y).
\] 
Thus there is a point-finite cover $\mathcal{U}$ of $X$ with $\mathcal{L}(\mathcal{U})\geq (1+\varepsilon)d(x,y)>d(x,y)$ satisfying $\mathrm{diam}(\mathcal{U})<(1-\varepsilon^2)d(x,y)<d(x,y)$.
But this is a contradiction since $\mathrm{diam}(\{x,y\})=d(x,y)$.
Therefore $d(x,y)\leq \frac{D}{1-C-\varepsilon}$ for every $x,y\in X$.
Since $0<\varepsilon<1-C$ was arbitrary, $\mathrm{diam}(X)\leq\frac{D}{1-C}$.
\end{proof}

In some cases it might be more natural to find bounds for $\Delta_X^{(u)}$ rather than $\Delta_X^{(c)}$ or vice versa.  Lemma \ref{easy} provides a way of switching from one to the other and this is especially easy in the case below.

\begin{lemma}
\label{lineartype}
Let $(X,d_X)$ be a metric space.  Given $C\in(0,\infty)$, $\Delta_X^{(c)}(R)\leq CR$ for all $R\in[0,\infty)$ iff $\Delta_X^{(u)}(r)\geq \frac{1}{C}r$ for all $r>0$.
\end{lemma}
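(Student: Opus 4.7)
The plan is to apply parts (i), (ii), and (iii) of Lemma \ref{easy} directly in each direction; once those three facts are in hand, the equivalence becomes essentially a matter of taking suprema/infima carefully in $R$ and $r$.

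For the forward direction, I would start with the hypothesis $\Delta_X^{(c)}(R) \leq CR$ and fix an arbitrary $r > 0$. For any $R < r/C$ we have $\Delta_X^{(c)}(R) \leq CR < r$, so we can choose $\varepsilon > 0$ small enough that $\Delta_X^{(c)}(R) + \varepsilon \leq r$. Part (ii) then yields $\Delta_X^{(u)}(\Delta_X^{(c)}(R) + \varepsilon) \geq R$, and monotonicity (part (i)) upgrades this to $\Delta_X^{(u)}(r) \geq R$. Letting $R \uparrow r/C$ gives $\Delta_X^{(u)}(r) \geq r/C$.

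For the reverse direction, I would fix $R \geq 0$ and use the hypothesis $\Delta_X^{(u)}(r) \geq r/C$. The case $R = 0$ is handled by the trivial singleton cover $\{\{x\}\}_{x\in X}$, which is point-finite and has diameter $0$. For $R > 0$, pick any $r > CR$; then $\Delta_X^{(u)}(r) \geq r/C > R > 0$, so part (iii) applies and gives $\Delta_X^{(c)}(\Delta_X^{(u)}(r) - \varepsilon) \leq r$ for all sufficiently small $\varepsilon > 0$. Choose $\varepsilon$ small enough that $\Delta_X^{(u)}(r) - \varepsilon > R$, so that monotonicity yields $\Delta_X^{(c)}(R) \leq r$. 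Taking the infimum over $r > CR$ produces $\Delta_X^{(c)}(R) \leq CR$, as desired.

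There is no real obstacle here beyond bookkeeping: the $\varepsilon$-slack that appears in parts (ii) and (iii) of Lemma \ref{easy} is the only subtlety, and in each direction it is absorbed by the strict inequality $r > CR$ (equivalently $R < r/C$) together with monotonicity. So the proof is essentially the observation that $\Delta_X^{(u)}$ and $\Delta_X^{(c)}$ are, up to $\varepsilon$-perturbation, generalized inverses of each other, and under the linear assumption this perturbation can be eliminated by a limiting argument.
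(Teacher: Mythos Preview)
Your proof is correct and follows essentially the same route as the paper's: both directions invoke Lemma~\ref{easy}(i)--(iii) and absorb the $\varepsilon$-slack by a limiting argument, with only cosmetic differences in parameterization (you let $R\uparrow r/C$ and $r\downarrow CR$, while the paper substitutes $R=(r-\varepsilon)/C$ and $r=C(R+\varepsilon)$ directly). Your separate treatment of $R=0$ via the singleton cover is unnecessary---the paper's choice $r=C(R+\varepsilon)>0$ handles $R=0$ uniformly---but it does no harm.
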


\begin{proof}
Suppose first that $\Delta_X^{(c)}(R)\leq CR$ for all $R\in[0,\infty)$.
Take any $r>0$ and $0<\varepsilon<r$.
Then $\Delta_X^{(c)}\left(\frac{r-\varepsilon}{C}\right)\leq r-\varepsilon$ and so by Lemma \ref{easy},
 \[\Delta_X^{(u)}(r)\geq \Delta_X^{(u)}\left(\Delta_X^{(c)}\left(\frac{r-\varepsilon}{C}\right)+\varepsilon\right)\geq \frac{1}{C}\cdot (r-\varepsilon ).\]
Since $0<\varepsilon<r$ was arbitrary, $\Delta_X^{(u)}(r)\geq \frac{1}{C}r$ for all $r>0$.\newline
Now suppose $\Delta_X^{(u)}(r)\geq \frac{1}{C}r$ for all $r>0$.
Take any $R\in[0,\infty)$ and $\varepsilon>0$.
Then $\Delta_X^{(u)}(C(R+\varepsilon))\geq R+\varepsilon$ and so by Lemma \ref{easy}, \[\Delta_X^{(c)}(R)\leq \Delta_X^{(c)}(\Delta_X^{(u)}(C(R+\varepsilon))-\varepsilon)\leq C(R+\varepsilon).\]  
Since $\varepsilon>0$ was arbitrary, $\Delta_X^{(c)}(R)\leq CR$ for all $R\in [0,\infty)$.
\end{proof}

\begin{lemma}
\label{norm}
Let $(X,\|\cdot\|_X)$ be a normed linear space.  The following are equivalent:  
\begin{enumerate}[(i)]
\item $\Delta_X^{(c)}(R)<\infty$ for some $R\in (0,\infty)$.
\item There is $C\in[0,\infty)$ such that $\Delta_X^{(c)}(R)\leq CR$ for all $R\in[0,\infty)$.
\item $X$ has the coarse Stone property.
\item $X$ has the uniform Stone property.
\end{enumerate}
\end{lemma}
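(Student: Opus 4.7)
The plan is to establish the cycle $(ii)\Rightarrow(iii)\Rightarrow(i)\Rightarrow(ii)$ together with the side implications $(ii)\Rightarrow(iv)\Rightarrow(i)$; this makes all four conditions equivalent. Most of the implications are immediate from the definitions or from Lemma \ref{easy}, and the only place where real work is needed — and where the linear structure of $X$ is essential — is the step $(i)\Rightarrow(ii)$.

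For $(i)\Rightarrow(ii)$ I would exploit the fact that in a normed linear space, multiplication by a positive scalar $\lambda$ is a bijective homothety scaling the metric by $\lambda$. Suppose $\Delta_X^{(c)}(R_0)<\infty$ for some $R_0>0$, and fix a point-finite cover $\mathcal{U}$ of $X$ with $\mathcal{L}(\mathcal{U})\geq R_0$ and $\mathrm{diam}(\mathcal{U})\leq D<\infty$. For each $\lambda>0$ form the rescaled family $\lambda\mathcal{U}=\{\lambda U : U\in\mathcal{U}\}$, where $\lambda U=\{\lambda u : u\in U\}$. Since $\lambda X=X$, this is again a cover of $X$; point-finiteness is preserved because $x\in\lambda U$ iff $x/\lambda\in U$; and a short check from the definitions gives $\mathcal{L}(\lambda\mathcal{U})=\lambda\mathcal{L}(\mathcal{U})$ and $\mathrm{diam}(\lambda\mathcal{U})=\lambda\mathrm{diam}(\mathcal{U})$. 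Given any $R>0$, taking $\lambda=R/R_0$ produces a point-finite cover with Lebesgue number $\geq R$ and diameter $\leq (D/R_0)R$, so $\Delta_X^{(c)}(R)\leq CR$ with $C=D/R_0$. The value $R=0$ is free, since the singleton cover $\{\{x\}\}_{x\in X}$ gives $\Delta_X^{(c)}(0)=0$.

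The remaining implications are quick. $(ii)\Rightarrow(iii)$ because $CR<\infty$ for every $R\in[0,\infty)$; $(iii)\Rightarrow(i)$ is trivial. $(ii)\Rightarrow(iv)$ because $\Delta_X^{(c)}(R)\leq CR\to 0$ as $R\to 0$, so Lemma \ref{easy}(iv) gives the uniform Stone property. Finally, $(iv)\Rightarrow(i)$: Lemma \ref{easy}(iv) says that the uniform Stone property is equivalent to $\lim_{R\to 0}\Delta_X^{(c)}(R)=0$, which in particular forces $\Delta_X^{(c)}(R)<\infty$ for all sufficiently small $R>0$.

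I do not anticipate any genuine obstacle here. The only substantive insight required is that in a normed space the class of point-finite covers is scale-invariant under the homotheties $x\mapsto\lambda x$, and consequently $\Delta_X^{(c)}$, once finite at a single positive value, is automatically bounded above by a linear function of $R$.
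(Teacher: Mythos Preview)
Your proof is correct and follows essentially the same approach as the paper: the only substantive step, $(i)\Rightarrow(ii)$, is handled in both by rescaling a point-finite cover via the homotheties $x\mapsto\lambda x$ of the normed space, yielding $C=D/R_0$. The remaining implications are routine; the paper arranges them as a single cycle $(i)\Rightarrow(ii)\Rightarrow(iii)\Rightarrow(iv)\Rightarrow(i)$ and invokes Lemma~\ref{lineartype} for $(iii)\Rightarrow(iv)$, whereas you use two overlapping cycles and Lemma~\ref{easy}(iv) instead, but this is only a cosmetic difference.
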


\begin{proof}  
(i) $\Rightarrow$ (ii):  The implication follows by simply scaling any uniformly bounded point-finite cover $\mathcal{U}$ of $X$ such that $\mathcal{L}(\mathcal{U})\geq R$ (and in this case one can take $C=\frac{\Delta_X^{(c)}(R)}{R}$).  \newline
(ii) $\Rightarrow$ (iii):  Clear. \newline
(iii) $\Rightarrow$ (iv):  If $X$ has the coarse Stone property, then in particular, $\Delta_X^{(c)}(1)<\infty$.  Thus, $X$ has the uniform Stone property by (i) $\Rightarrow$ (ii) and Lemma \ref{lineartype}. \newline
(iv) $\Rightarrow$ (i):  Lemma \ref{easy}.
\end{proof}

The following two propositions show that the uniform and coarse Stone properties are hereditary in the sense that a uniformly/coarsely embedded subset of a metric space with the uniform/coarse Stone property has the uniform/coarse Stone property respectively.

\begin{proposition}
\label{embed2}
Let $(X, d_X)$ be a metric space and $(Y, d_Y)$ a metric space with the uniform Stone property.  
If there exists a uniform embedding $f\colon X\to Y$, then $X$ has the uniform Stone property.  
If $f$ is a bi-Lipschitz embedding and if there is $c>0$ such that $\Delta_Y^{(u)}(r)\geq cr$ for all $r>0$ , then $\Delta_X^{(u)}(r)\geq \frac{c}{\mathrm{dist}(f)} r$ for all $r>0$.
\end{proposition}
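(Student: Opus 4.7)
The plan is to transfer covers back from $Y$ to $X$ by taking preimages. Given a uniform cover $\mathcal{U}$ of $X$ with Lebesgue number $\ell=\mathcal{L}(\mathcal{U})>0$, I aim to produce a point-finite uniform refinement. First pick $\ell'\in(0,\ell)$ and then $r\in(0,\rho_f(\ell'))$; the latter interval is nonempty since $f$ is a uniform embedding, so $\rho_f(\ell')>0$. Because $Y$ has the uniform Stone property, $\Delta_Y^{(u)}(r)>0$, so there exists a point-finite cover $\mathcal{V}$ of $Y$ with $\mathrm{diam}(\mathcal{V})\le r$ and $\mathcal{L}(\mathcal{V})>0$. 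Set $\mathcal{W}=\{f^{-1}(V):V\in\mathcal{V}\}$.

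Next I verify that $\mathcal{W}$ is a point-finite uniform refinement of $\mathcal{U}$. Point-finiteness is inherited from $\mathcal{V}$, since $x\in f^{-1}(V)$ iff $f(x)\in V$. For the refinement property, given $V\in\mathcal{V}$ and $x_1,x_2\in f^{-1}(V)$, the inequality $d_Y(f(x_1),f(x_2))\le r<\rho_f(\ell')$ forces $d_X(x_1,x_2)<\ell'$ by the definition of $\rho_f$; hence $\mathrm{diam}(f^{-1}(V))\le\ell'<\mathcal{L}(\mathcal{U})$, so $f^{-1}(V)\subseteq U$ for some $U\in\mathcal{U}$. For uniformity, use $\lim_{t\to 0}\omega_f(t)=0$ to pick $\delta>0$ with $\omega_f(\delta)<\mathcal{L}(\mathcal{V})$; then any $E\subseteq X$ with $\mathrm{diam}(E)<\delta$ satisfies $\mathrm{diam}(f(E))\le\omega_f(\delta)<\mathcal{L}(\mathcal{V})$, so $f(E)\subseteq V$ for some $V\in\mathcal{V}$ and thus $E\subseteq f^{-1}(V)$; this gives $\mathcal{L}(\mathcal{W})\ge\delta>0$.

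For the quantitative bi-Lipschitz statement, set $A=\mathrm{Lip}(f)$ and $B=\mathrm{Lip}(f^{-1})$, so that $\mathrm{dist}(f)=AB$ and $\tfrac{1}{B}d_X(x_1,x_2)\le d_Y(f(x_1),f(x_2))\le A\,d_X(x_1,x_2)$. Given $r,\varepsilon>0$, the hypothesis $\Delta_Y^{(u)}(r/B)\ge cr/B$ yields a point-finite cover $\mathcal{V}$ of $Y$ with $\mathrm{diam}(\mathcal{V})\le r/B$ and $\mathcal{L}(\mathcal{V})>cr/B-\varepsilon$. The pullback $\mathcal{W}=\{f^{-1}(V):V\in\mathcal{V}\}$ then satisfies $\mathrm{diam}(\mathcal{W})\le r$ (from $d_X\le B\,d_Y$) and $\mathcal{L}(\mathcal{W})\ge(cr/B-\varepsilon)/A$ (from $d_Y\le A\,d_X$), by the same argument as above with $\rho_f$ and $\omega_f$ replaced by their explicit linear estimates. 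Letting $\varepsilon\to 0$ gives $\Delta_X^{(u)}(r)\ge cr/(AB)=\frac{c}{\mathrm{dist}(f)}\,r$. The only obstacle is careful bookkeeping between strict and non-strict inequalities in the definitions of $\mathcal{L}$ and $\mathrm{diam}$; this is precisely why the auxiliary $\ell'<\ell$ is inserted in the qualitative part and why the $\varepsilon>0$ slack is used in the quantitative part to realize the supremum defining $\Delta_Y^{(u)}$.
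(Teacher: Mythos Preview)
Your argument is correct and follows the same idea as the paper: pull back a suitable point-finite cover of $Y$ through $f^{-1}$, then use $\rho_f$ to bound the diameter of the preimages and $\omega_f$ to bound the Lebesgue number from below. The only cosmetic difference is that for the qualitative part you verify the original Definition~\ref{usp} (producing a point-finite uniform refinement of a given uniform cover), whereas the paper works directly with the equivalent $\Delta_X^{(u)}$ formulation (building, for each $r>0$, a point-finite cover of $X$ with diameter $\le r$ and positive Lebesgue number); your quantitative bi-Lipschitz computation is essentially identical to the paper's.
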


\begin{proof}
Fix $r>0$.
Since $f$ is a uniform embedding, $\lim_{t\to 0}\omega_f(t)=0$ and $\rho_f(t)>0$ for all $t>0$.
Take any $0<\varepsilon_1 <\rho_f(r)$ and $0<\varepsilon_2<\Delta_Y^{(u)}(\rho_f(r)-\varepsilon_1)$ and let $\mathcal{V}$ be a point-finite cover of $Y$ such that $\mathrm{diam}(\mathcal{V})\leq \rho_f(r)-\varepsilon_1$ and $\mathcal{L}(\mathcal{V})\geq \Delta_Y^{(u)}(\rho_f(r)-\varepsilon_1)-\varepsilon_2$.  
Let $\mathcal{U}=\{f^{-1}(V)\}_{V\in \mathcal{V}}$.
Then $\mathcal{U}$ is a cover of $X$.
Note that $\mathcal{U}$ inherits point-finiteness from $\mathcal{V}$.
And for any $V\in \mathcal{V}$, $\mathrm{diam}(f^{-1}(V))\leq r$ since $\mathrm{diam}(V)\leq \rho_f(r)-\varepsilon_1$.
This means $\mathrm{diam}(\mathcal{U})\leq r$.  
Thus,
\[\Delta_X^{(u)}(r)\geq \mathcal{L}(\mathcal{U})\geq \inf \omega_f^{-1}([\mathcal{L}(\mathcal{V}),\infty])\geq \inf \omega_f^{-1}([\Delta_Y^{(u)}(\rho_f(r)-\varepsilon_1)-\varepsilon_2,\infty])>0\]
by definition of $\Delta_X^{(u)}$, the assumptions on $\rho_f$ and $\omega_f$, and since $Y$ has the uniform Stone property.  
Thus, $X$ has the uniform Stone property.
The special case follows by bounding $\omega_f$, $\rho_f$, and $\Delta_Y^{(u)}$ with linear functions and letting $\varepsilon_1,\varepsilon_2\to 0$.
\end{proof}

\begin{proposition}
\label{embed1}
Let $(X, d_X)$ be a metric space and $(Y, d_Y)$ a metric space with the coarse Stone property.  If there exists a coarse embedding $f: X\to Y$, then $X$ has the coarse Stone property.  
If $f$ is a coarse Lipschitz embedding and there are $C,D\in[0,\infty)$ such that $\Delta_Y^{(c)}(R)\leq CR+D$ for all $R\in[0,\infty)$, then there are $C',D'\in[0,\infty)$ such that $\Delta_X^{(c)}(R)<C'R+D'$ for all $R\in[0,\infty)$.
If, in particular, $f$ is a bi-Lipschitz embedding and $D=0$, then $\Delta_X^{(c)}(R)\leq C\mathrm{dist}(f)R$ for all $R\in[0,\infty)$.
\end{proposition}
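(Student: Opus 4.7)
The plan is to mimic the pullback construction of Proposition~\ref{embed2} in the coarse category. Fix $R\in[0,\infty)$. Since $f$ is coarse, $\omega_f(R)<\infty$, and since $Y$ has the coarse Stone property, for any $\varepsilon_1>0$ I can pick a point-finite cover $\mathcal{V}$ of $Y$ with $\mathcal{L}(\mathcal{V})\geq\omega_f(R)+\varepsilon_1$ and $\mathrm{diam}(\mathcal{V})\leq\Delta_Y^{(c)}(\omega_f(R)+\varepsilon_1)+\varepsilon_2$ for any $\varepsilon_2>0$. I then set $\mathcal{U}=\{f^{-1}(V):V\in\mathcal{V}\}$ and verify the three required properties.

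Point-finiteness of $\mathcal{U}$ transfers from that of $\mathcal{V}$ via the equivalence $x\in f^{-1}(V)\iff f(x)\in V$. For the Lebesgue number, any $E\subseteq X$ with $\mathrm{diam}(E)<R$ satisfies $\mathrm{diam}(f(E))\leq\omega_f(\mathrm{diam}(E))\leq\omega_f(R)<\mathcal{L}(\mathcal{V})$, so $f(E)\subseteq V$ for some $V\in\mathcal{V}$, yielding $\mathcal{L}(\mathcal{U})\geq R$. For the diameter, $\rho_f(d_X(x_1,x_2))\leq d_Y(f(x_1),f(x_2))\leq\mathrm{diam}(\mathcal{V})$ on each $U=f^{-1}(V)$, and since $\rho_f\to\infty$ the generalized inverse $\rho_f^{-1}(s):=\sup\{t:\rho_f(t)\leq s\}$ is finite on $[0,\infty)$, giving $\mathrm{diam}(\mathcal{U})\leq\rho_f^{-1}(\mathrm{diam}(\mathcal{V}))<\infty$. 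Thus $\Delta_X^{(c)}(R)<\infty$, proving the qualitative half.

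For the coarse Lipschitz refinement I would feed the linear bounds $\omega_f(t)\leq At+B$, $\rho_f^{-1}(s)\leq A(s+B)$ (coming from $\rho_f(t)\geq t/A-B$), and $\Delta_Y^{(c)}(r)\leq Cr+D$ into the chain
\[\Delta_X^{(c)}(R)\leq\rho_f^{-1}\!\left(\Delta_Y^{(c)}(\omega_f(R)+\varepsilon_1)+\varepsilon_2\right),\]
and then let $\varepsilon_1,\varepsilon_2\to 0$ to obtain a bound of the form $A^2CR+A(CB+D+B)$; any slightly larger pair $(C',D')$ then delivers the strict inequality $\Delta_X^{(c)}(R)<C'R+D'$. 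In the bi-Lipschitz case with $D=0$, one takes $B=0$, $A=\mathrm{Lip}(f)$, and $\rho_f^{-1}(s)\leq\mathrm{Lip}(f^{-1})\,s$; the additive terms vanish and the chain collapses cleanly to $C\,\mathrm{Lip}(f)\,\mathrm{Lip}(f^{-1})\,R=C\,\mathrm{dist}(f)\,R$.

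The one subtle step is engineering the strict inequality $\mathrm{diam}(f(E))<\mathcal{L}(\mathcal{V})$: since the monotonicity of $\omega_f$ only delivers the non-strict bound $\mathrm{diam}(f(E))\leq\omega_f(R)$, one must bake the $\varepsilon_1$-slack into the choice of $\mathcal{V}$ up front rather than trying to set $\mathcal{L}(\mathcal{V})=\omega_f(R)$. Everything else is a mechanical substitution of affine estimates in the correct order.
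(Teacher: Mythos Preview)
Your proposal is correct and follows essentially the same argument as the paper: pull back a point-finite cover $\mathcal{V}$ of $Y$ with $\mathcal{L}(\mathcal{V})\geq\omega_f(R)+\varepsilon$ along $f$, check point-finiteness, Lebesgue number, and diameter of $\mathcal{U}=\{f^{-1}(V)\}$ exactly as you do, and then specialize to the affine/linear case by substitution and sending the slack to zero. Your explicit handling of the strict inequality and the remark about why the $\varepsilon_1$-slack is needed are slightly more detailed than the paper's terse ``the special cases follow by bounding $\omega_f$, $\rho_f$, and $\Delta_Y^{(c)}$ with affine or linear functions and letting $\varepsilon\to 0$,'' but the underlying mechanism is identical.
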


\begin{proof}
Fix $R\in[0,\infty)$.
Since $f$ is a coarse embedding, $\omega_f(t)<\infty$ for all $t\in[0,\infty)$ and $\lim_{t\to \infty} \rho_f(t)=\infty$.
Take any $\varepsilon>0$ and let $\mathcal{V}$ be a point-finite cover of $Y$ such that $\mathcal{L}(\mathcal{V})\geq \omega_f(R)+\varepsilon$ and $\mathrm{diam}(\mathcal{V})\leq \Delta_Y^{(c)}(\omega_f(R)+\varepsilon)+\varepsilon$.  
Let $\mathcal{U}=\{f^{-1}(V)\}_{V\in \mathcal{V}}$.
Then $\mathcal{U}$ is a cover of $X$.
Note that $\mathcal{U}$ inherits point-finiteness from $\mathcal{V}$.
Now take any $A\subseteq X$ such that $\mathrm{diam}(A)< R$.
Then $\mathrm{diam}(f(A))< \omega_f(R)+\varepsilon$, and so $f(A)\subseteq V$ for some $V\in \mathcal{V}$.
Therefore $A\subseteq f^{-1}(V)=U$ for some $U\in \mathcal{U}$.
Since $A\subseteq X$ was arbitrary, this means $\mathcal{L}(\mathcal{U})\geq R$.
Thus,
\begin{align*}
 \Delta_X^{(c)}(R)\leq \mathrm{diam}(\mathcal{U})
 &\le \sup \rho_f^{-1}([0,\mathrm{diam}(\mathcal{V})])\\
 &\le \sup \rho_f^{-1}([0,\Delta_Y^{(c)}(\omega_f(R)+\varepsilon)+\varepsilon])<\infty
\end{align*}
by definition of $\Delta_X^{(c)}$, the assumptions on $\rho_f$ and $\omega_f$, and since $Y$ has the coarse Stone property.  
Thus, $X$ has the coarse Stone property.
The special cases follow by bounding $\omega_f$, $\rho_f$, and $\Delta_Y^{(c)}$ with affine or linear functions and letting $\varepsilon\to 0$.
\end{proof}

\begin{proposition}
\label{c0}
For any cardinality $\kappa$, $\Delta_{c_0^+(\kappa)}^{(c)}(R)= R$ for all $R\in[0,\infty)$.
\end{proposition}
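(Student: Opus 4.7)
The plan is to prove both inequalities $\Delta^{(c)}_{c_0^+(\kappa)}(R) \geq R$ and $\Delta^{(c)}_{c_0^+(\kappa)}(R) \leq R$. The lower bound is essentially immediate: if $\mathcal{U}$ is a point-finite cover of $c_0^+(\kappa)$ with $\mathcal{L}(\mathcal{U})\geq R$, then for each $t<R$ the two-point set $\{0,te_\alpha\}$ (where $e_\alpha$ is a unit coordinate vector) has diameter $t<R$ and hence sits in some $U\in\mathcal{U}$, giving $\mathrm{diam}(U)\geq t$. Letting $t\to R^{-}$ yields $\mathrm{diam}(\mathcal{U})\geq R$, so $\Delta^{(c)}_{c_0^+(\kappa)}(R)\geq R$.

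The substantive step is the upper bound, which I would prove by constructing, for each $\varepsilon>0$, an explicit point-finite cover $\mathcal{U}_\varepsilon$ of $c_0^+(\kappa)$ satisfying $\mathrm{diam}(\mathcal{U}_\varepsilon)\leq R+\varepsilon$ and $\mathcal{L}(\mathcal{U}_\varepsilon)\geq R$; letting $\varepsilon\to 0$ then gives $\Delta^{(c)}_{c_0^+(\kappa)}(R)\leq R$. Specifically, for each finitely supported $\psi\colon\kappa\to\mathbb{Z}_{\geq 0}$ set
$$U_\psi = \{x\in c_0^+(\kappa) : \psi(\alpha)\varepsilon\leq x_\alpha\leq \psi(\alpha)\varepsilon+R+\varepsilon \text{ for every } \alpha<\kappa\},$$
and take $\mathcal{U}_\varepsilon = \{U_\psi\}$. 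The diameter bound is immediate from the definition, so the three things to verify are that $\mathcal{U}_\varepsilon$ covers $c_0^+(\kappa)$, that it is point-finite, and that $\mathcal{L}(\mathcal{U}_\varepsilon)\geq R$.

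For covering, given $x\in c_0^+(\kappa)$ define $\psi(\alpha)=\lfloor x_\alpha/\varepsilon\rfloor$ on the finite set where $x_\alpha>R+\varepsilon$ and $\psi(\alpha)=0$ elsewhere; this is finitely supported by the $c_0$ condition, and $x\in U_\psi$. For point-finiteness, only $\psi(\alpha)=0$ is admissible on coordinates with $x_\alpha<\varepsilon$, and only finitely many $\alpha$ satisfy $x_\alpha\geq\varepsilon$, each allowing finitely many integer choices of $\psi(\alpha)$. For the Lebesgue number, given nonempty $E\subseteq c_0^+(\kappa)$ with $\mathrm{diam}(E)<R$, set $a_\alpha=\inf_{y\in E}y_\alpha$ and $\psi(\alpha)=\lfloor a_\alpha/\varepsilon\rfloor$; each coordinate of each point of $E$ then lies in $[a_\alpha,a_\alpha+R)\subseteq[\psi(\alpha)\varepsilon,\psi(\alpha)\varepsilon+R+\varepsilon)$, and choosing any $y_0\in E$ gives $a_\alpha\leq y_{0,\alpha}$, showing $\psi$ is finitely supported since $y_0\in c_0^+$. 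Hence $E\subseteq U_\psi$.

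The main obstacle is calibrating the cover so that all three properties hold simultaneously. The cell width $R+\varepsilon$ combined with the step size $\varepsilon$ produces an overlap of $R$ between consecutive cells along each coordinate axis, which is exactly the Lebesgue number required. The restriction to finitely supported $\psi$ is essential for point-finiteness (without it each $x$ would lie in uncountably many cover sets when $\kappa$ is uncountable), and the $c_0$ condition --- coordinates of each $x$ tending to zero --- is what makes this restriction simultaneously compatible with covering the whole space.
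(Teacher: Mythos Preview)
Your proof is correct and follows essentially the same approach as the paper. Your cover $\{U_\psi\}$ is the same construction as the paper's $\{U_{M,x}\}$ up to notation (your $\varepsilon$ is the paper's $1/n$, and the paper works with $2R$ in place of your $R$); the verifications of covering, point-finiteness, and the Lebesgue number bound are likewise parallel, with your infimum argument for $\mathcal{L}(\mathcal{U}_\varepsilon)\geq R$ being a slightly more direct version of the paper's observation that a set of diameter $<2R$ in $c_0^+(\kappa)$ lies in a ball of radius $R$. For the lower bound, the paper appeals to Lemma~\ref{C<1}, whose proof rests on exactly the two-point obstruction you spell out explicitly.
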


\begin{proof}
Take any $n\in \mathbb{N}$ and $R\in[0,\infty)$.
Given a finite subset $M$ of $\kappa$, denote the set $\{x\in (\mathbb{N}\cup \{0\})^\kappa\ |\ x_\xi=0 \mbox{ if }\xi\notin M\}$ by $\mathbb{N}^M$.
For each finite subset $M$ of $\kappa$ and $x\in \mathbb{N}^{M}$, let
\[
U_{M,x}=\left\{f\in c_0^+(\kappa)\ |\ f(\xi)\in \frac{x_\xi}{n}+\left[0, 2R+\frac{1}{n}\right)\mbox{ for all } \xi\in\kappa\right\}.
\]
Then for a fixed finite subset $M$ of $\kappa$ and a fixed $f\in c_0^+(\kappa)$, there are at most $(2n\lceil R\rceil+1)^{|M|}$ many $x\in \mathbb{N}^{M}$ such that $f\in U_{M,x}$.
Let \[\mathcal{U}=\left\{U_{M,x}\ |\ M \mbox{ is a finite subset of } \kappa \mbox{ and }x\in\mathbb{N}^{M}\right\}.\]
Now take any $f\in c_0^+(\kappa)$.
There is a finite subset $M$ of $\kappa$ such that $f(\xi)<\frac{1}{n}$ if $\xi \not\in M$, and in this case there is $x\in \mathbb{N}^{M}$ such that $B_R(f)\subseteq U_{M,x}$ (simply choose $x_\xi=\lfloor n(f(\xi)-R)\rfloor$ when $\xi\in M$ and $x_\xi=0$ otherwise).
Now suppose $M'\supsetneq M$ and  $x'\in \mathbb{N}^{M'}$ is such that $f\in U_{M',x'}$.
Then for all $\xi\in M'\backslash M$, $x_\xi'=0$ (or else $f(\xi)\geq\frac{1}{n}$, contradicting the choice of $M$).
Thus $U_{M',x'}=U_{M,y}$ for some $y\in \mathbb{N}^{M}$.  
This means that for every $f\in c_0^+(\kappa)$, $f\in U$ for only finitely many $U\in \mathcal{U}$.
By the above, $\mathcal{U}$ is a point-finite cover of $c_0^+(\kappa )$ refined by $\{B_R(f)\}_{f\in c_0^+(\kappa )}$ such that $\mathrm{diam}(\mathcal{U})=2R+\frac{1}{n}$.
Since every $A\subseteq c_0^+(\kappa )$ such that $\mathrm{diam}(A)< 2R$ is contained in a ball of radius $R$ (centered at $\left(\frac{\sup \pi_\tau(A)-\inf \pi_\tau(A)}{2}\right)_{\tau<\kappa}$, where $\pi_\tau$ is the $\tau$-th coordinate functional), this means $\mathcal{L}(\mathcal{U})\geq 2R$.
Thus, since $n\in \mathbb{N}$ was arbitrary, $\Delta_{c_0^+(\kappa )}^{(c)}(2R)\leq 2R$.
And so, by Lemma \ref{C<1}, $\Delta_{c_0^+(\kappa)}^{(c)}(R)= R$ for all $R\in [0,\infty)$.
\end{proof}

\begin{corollary}
\label{c0corollary}
For any infinite cardinality $\kappa$, $\Delta_{c_0(\kappa)}^{(c)}(R)=2R$ for all $R\in [0,\infty)$.
\end{corollary}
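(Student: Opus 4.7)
The plan is to prove the corollary by establishing the upper bound $\Delta_{c_0(\kappa)}^{(c)}(R)\leq 2R$ via an embedding argument and the lower bound $\Delta_{c_0(\kappa)}^{(c)}(R)\geq 2R$ via a pigeonhole computation. For the upper bound, I use the infinite cardinality of $\kappa$ to partition $\kappa=A\sqcup B$ with $|A|=|B|=\kappa$; fixing bijections $\alpha\colon \kappa\to A$ and $\beta\colon \kappa\to B$, I define $\phi\colon c_0(\kappa)\to c_0^+(\kappa)$ by $\phi(x)_{\alpha(\xi)}=\max(x_\xi,0)$ and $\phi(x)_{\beta(\xi)}=\max(-x_\xi,0)$ for $\xi<\kappa$. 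The pointwise inequality $\max(|a^+-b^+|,|a^--b^-|)\leq |a-b|\leq 2\max(|a^+-b^+|,|a^--b^-|)$ for $a,b\in\mathbb{R}$ (with the right-hand side attained when $a=-b$) lifts to $\|\phi(x)-\phi(y)\|_\infty\leq \|x-y\|_\infty\leq 2\|\phi(x)-\phi(y)\|_\infty$, so $\mathrm{Lip}(\phi)=1$, $\mathrm{Lip}(\phi^{-1})=2$, and $\mathrm{dist}(\phi)=2$. Proposition \ref{c0} supplies $\Delta_{c_0^+(\kappa)}^{(c)}(R)\leq R$, so the special case of Proposition \ref{embed1} yields $\Delta_{c_0(\kappa)}^{(c)}(R)\leq 1\cdot 2\cdot R=2R$.

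For the lower bound, fix any point-finite cover $\mathcal{U}$ of $c_0(\kappa)$ with $\mathcal{L}(\mathcal{U})\geq R>0$, let $N$ be the (finite, by point-finiteness) number of members of $\mathcal{U}$ containing $0$, and fix $\delta\in(0,R)$. Choose a finite $F\subseteq\kappa$ with $2^{|F|}>N$, which is possible since $\kappa$ is infinite. For each sign pattern $\varepsilon\in\{-1,+1\}^F$, set $p_\varepsilon=(R-\delta)\sum_{\xi\in F}\varepsilon(\xi)e_\xi\in c_0(\kappa)$. Then $\mathrm{diam}\{0,p_\varepsilon\}=R-\delta<R\leq \mathcal{L}(\mathcal{U})$, so there exists $U_\varepsilon\in\mathcal{U}$ with $\{0,p_\varepsilon\}\subseteq U_\varepsilon$, and necessarily $0\in U_\varepsilon$. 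Pigeonhole on $2^{|F|}>N$ patterns gives distinct $\varepsilon_1,\varepsilon_2$ with $U_{\varepsilon_1}=U_{\varepsilon_2}=:U^*$; picking $\xi^*\in F$ with $\varepsilon_1(\xi^*)\neq \varepsilon_2(\xi^*)$ makes the $\xi^*$-coordinates of $p_{\varepsilon_1}$ and $p_{\varepsilon_2}$ differ by exactly $2(R-\delta)$, so $\mathrm{diam}(\mathcal{U})\geq \mathrm{diam}(U^*)\geq 2(R-\delta)$. Since $\delta>0$ was arbitrary, $\Delta_{c_0(\kappa)}^{(c)}(R)\geq 2R$.

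The main obstacle is the lower bound: restricting a cover of $c_0(\kappa)$ to the isometric subspace $c_0^+(\kappa)$ and applying Proposition \ref{c0} only delivers $\Delta_{c_0(\kappa)}^{(c)}(R)\geq R$, so the factor of $2$ has to be extracted by a different route. The sign-cube pigeonhole accomplishes this by pitting the $2^{|F|}$ corners of an arbitrary-dimensional cube against the finitely many cover members at $0$, forcing a single cover element to contain an antipodal pair $\pm(R-\delta)e_{\xi^*}\!+\!\text{(common part)}$ and thus to have diameter approaching $2R$. This step genuinely uses the infinite-cardinality hypothesis, since $|F|$ must be allowed to grow with $N$.
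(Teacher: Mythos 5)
Your proof is correct and follows essentially the same route as the paper: the upper bound via the distortion-$2$ positive/negative-part embedding of $c_0(\kappa)$ into $c_0^+(\kappa)$ combined with Propositions \ref{c0} and \ref{embed1}, and the lower bound via sign patterns on a finite coordinate set played off against point-finiteness at the origin. Your lower bound is just the contrapositive packaging of the paper's contradiction argument (you pigeonhole two corners into one cover element to force large diameter, whereas the paper forces all $2^{|M|}$ cover elements to be distinct), so the underlying idea is identical.
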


\begin{proof}
Fix $R\in [0,\infty)$ and suppose $\Delta_{c_0(\kappa)}^{(c)}(R)<2R$.
Then there exists a point-finite cover $\mathcal{U}$ of $c_0(\kappa)$ such that $\mathcal{L}(\mathcal{U})\geq R$ and $\mathrm{diam}(\mathcal{U})<2R$.
Let $(e_\tau)_{\tau<\kappa}$ be the standard basis for $c_0(\kappa)$.
Given a finite subset $M$ of $\kappa$ and an $\varepsilon\in \{-1,1\}^M$, let 
\[A_{M,\varepsilon}=\left\{\sum_{\tau\in M}C_\tau e_\tau\ |\ \varepsilon_\tau C_\tau\in \left(\frac{1}{8}\mathrm{diam}(\mathcal{U})-\frac{1}{4}R,\frac{1}{4}\mathrm{diam}(\mathcal{U})+\frac{1}{2}R\right)\right\}.\]
Fix a finite subset $M$ of $\kappa$.
Note that for any any $\varepsilon\in \{-1,1\}^M$, $\mathrm{diam}(A_{M,\varepsilon})<R$, and so there is $U_{M,\varepsilon}\in \mathcal{U}$ such that $A_{M,\varepsilon}\subseteq U_{M,\varepsilon}$.
But $\mathrm{diam}(A_{M,\delta}\cup A_{M,\varepsilon})>\mathrm{diam}(\mathcal{U})$ whenever $\delta, \varepsilon \in \{-1,1\}^M$ are such that $\delta\neq \varepsilon$, and so in this case $U_{M,\delta}\neq U_{M,\varepsilon}$.
Thus, as $0\in A_{M,\varepsilon}$ for every every $\varepsilon\in \{-1,1\}^M$, there are at least $2^{|M|}$ different $U\in \mathcal{U}$ such that $0\in U$.
But $\kappa$ is infinite, and so has subsets of arbitrarily large finite cardinality.
That is, there are infinitely many $U\in \mathcal{U}$ such that $0\in U$, contradicting the point-finiteness of $\mathcal{U}$.
Therefore $\Delta_{c_0(\kappa)}^{(c)}(R)\geq 2R$.
Now, given $f\in c_0(\kappa)$, define $g_f\in c_0^+(\kappa)$ by $g(2\xi)=\max\{0,f(\xi)\}$ and $g(2\xi+1)=\max\{0,-f(\xi)\}$ for every $\xi<\kappa$.
The map $f\mapsto g_f$ is a 2-Lipschitz embedding, and so by Proposition \ref{embed1} and Proposition \ref{c0}, $\Delta_{c_0(\kappa)}^{(c)}(R)\leq 2R$.  That is, $\Delta_{c_0(\kappa)}^{(c)}(R)=2R$.
\end{proof}

Note that Proposition \ref{embed1}, Proposition \ref{c0}, and Corollary \ref{c0corollary} together show that the optimal distortion for a bi-Lipschitz embedding of $c_0(\kappa)$ into $c_0^+(\kappa)$ is 2.  We now come to the main result.  The proof combines techniques from both Pelant and Assouad.

\begin{theorem}
\label{lipschitz}
Let $(X,d_X)$ be a metric space with density character $\kappa$.
If there are $C\in[1,\infty)$ and $D\in[0,\infty)$ such that $\Delta_X^{(c)}(R)\leq CR+D$ for all $R\in[0,\infty)$; then for any $\lambda>0$, any $K> 2(C+\lambda)$, and any $L>\frac{(C+\lambda)D}{\lambda}$; there exists a coarse Lipschitz embedding $f\colon X\to c_0^+(\kappa)$ such that \[d_X(x,y)-L\leq \|f(x)-f(y)\|_{\infty}\leq Kd_X(x,y)\] for every $x,y\in X$.  If $D=0$, then it is possible to take $L=0$.
\end{theorem}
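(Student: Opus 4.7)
\emph{Strategy and setup.} The plan is to adapt the multi-scale embedding schemes of Pelant and Assouad, building $f$ as an assembly of $1$-Lipschitz ``distance-to-complement'' coordinates taken over a hierarchy of point-finite covers. Set $\mu=C+\lambda$. Choose a geometric ladder of scales $(R_n)$ with ratio $\mu$ (adjusted via an affine recursion to accommodate the constant $D$). For each admissible $n$, apply the hypothesis $\Delta_X^{(c)}(R_n)\leq CR_n+D$ to obtain a point-finite cover $\mathcal{U}_n$ of $X$ with $\mathcal{L}(\mathcal{U}_n)\geq R_n$ and $\mathrm{diam}(\mathcal{U}_n)\leq CR_n+D+\eta_n$ for a small $\eta_n>0$, and use Lemma~\ref{covercardinality} to restrict to $|\mathcal{U}_n|\leq\kappa$. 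The ratio $\mu$ is tuned so that $\mathrm{diam}(\mathcal{U}_n)\leq\mathcal{L}(\mathcal{U}_{n+1})$, giving a nested refinement structure.

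\emph{The embedding and the upper bound.} For each pair $(n,U)$ with $U\in\mathcal{U}_n$, set $\phi_{n,U}(x)=\min\{R_n,d_X(x,X\setminus U)\}$: a $1$-Lipschitz function taking values in $[0,R_n]$ and supported in $U$. Inject $\bigsqcup_n\mathcal{U}_n$ (of cardinality at most $\kappa$) into $\kappa$, and define $f\colon X\to c_0^+(\kappa)$ by $f(x)_{(n,U)}=\phi_{n,U}(x)$. The upper bound $\|f(x)-f(y)\|_\infty\leq d_X(x,y)\leq Kd_X(x,y)$ follows directly from the Lipschitzness of each coordinate.

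\emph{Lower bound and main obstacle.} For the lower bound, given $x,y\in X$ with $d_X(x,y)>L$, I would select a scale $n$ on the ladder for which two conditions meet: the diameter bound $CR_n+D<d_X(x,y)$ forces $y\notin U$ for every $U\in\mathcal{U}_n$ containing $x$, while the Lebesgue property simultaneously furnishes a $U\in\mathcal{U}_n$ with $\phi_{n,U}(x)\geq d_X(x,y)-L$. Comparing the upper and lower ends of the resulting admissible window for $R_n$ against the geometric spacing $\mu$ of the ladder is what forces the precise inequalities $K>2(C+\lambda)$ and $L>(C+\lambda)D/\lambda$; when $D=0$ the same window persists to all positive distances and allows $L=0$. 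The main technical obstacle, however, is verifying that $f(x)$ actually lies in $c_0^+(\kappa)$, i.e.\ that $\{(n,U):\phi_{n,U}(x)>\varepsilon\}$ is finite for every $\varepsilon>0$. The truncation by $R_n$ dispatches every scale with $R_n\leq\varepsilon$, and point-finiteness of each $\mathcal{U}_n$ caps the count at each fixed scale, but pooling the infinitely many scales with $R_n>\varepsilon$ requires the refinement hierarchy arranged in the setup step: along the nested chain $\ldots\supseteq U_n(x)\supseteq U_{n-1}(x)\supseteq\ldots$ of cover elements containing $x$, the coordinate values telescope so that only a geometrically decaying tail survives. Calibrating the scale-selection window to match the exact constants in the theorem, rather than the cruder bounds of a naive analysis, is the secondary piece of bookkeeping, and is precisely where the slack $\lambda$ in $\mu=C+\lambda$ enters.
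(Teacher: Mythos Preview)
Your proposal has two genuine gaps.

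The primary one is the $c_0$ membership. With your coordinates $\phi_{n,U}(x)=\min\{R_n,d_X(x,X\setminus U)\}$, for each large $n$ the Lebesgue condition $\mathcal{L}(\mathcal{U}_n)\geq R_n$ guarantees some $U\in\mathcal{U}_n$ containing a ball of radius close to $R_n/2$ about $x$, whence $\phi_{n,U}(x)$ is of order $R_n/2\to\infty$. So $f(x)$ is not even bounded, let alone in $c_0^+(\kappa)$. The ``telescoping along the refinement chain'' does not help: as $n$ grows the sets $U_n(x)$ get \emph{larger}, so $d_X(x,X\setminus U_n(x))$ increases rather than decays, and no difference of consecutive terms is what you have written down. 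The paper's fix is a different idea entirely: fix a base point $O\in X$ and replace each $U_{n,\tau}$ by $V_{n,\tau}=U_{n,\tau}\setminus B_{(C-1+\lambda)t^n/2}(O)$. Every coordinate function is then supported outside a ball about $O$ of radius growing with $n$, so for fixed $x$ only finitely many scales contribute at all, and point-finiteness handles each of those. The lower-bound argument survives this excision because one may assume without loss of generality that $d_X(x,O)\geq d_X(y,O)$, which forces $x$ far enough from $O$ at the relevant scale that the removed ball does not touch $B_{t^n/2}(x)$.

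The secondary gap is in the lower bound itself. Your coordinates are capped at $R_n$, and to get $\phi_{n,U}(y)=0$ you need $y\notin U$, which forces $CR_n+D<d_X(x,y)$; together these give $\phi_{n,U}(x)\leq R_n<d_X(x,y)/C$, well short of $d_X(x,y)-L$ for large distances whenever $C>1$ (and even for $C=1$ the window $[d_X(x,y)-L,\,d_X(x,y)-D)$ has fixed width while your geometric ladder has gaps of size $(\mu-1)R_n\to\infty$). The paper scales each coordinate by $K$, taking $f_{n,\tau}(x)=K\min\{d_X(x,V_{n,\tau}^c),t^n/2\}$; the factor $K>2(C+\lambda)$ is exactly what bridges $R_n$ and $d_X(x,y)$ at the correctly chosen scale. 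Without that scaling, your upper bound $\|f(x)-f(y)\|_\infty\leq d_X(x,y)$ is too tight to coexist with the required lower bound.
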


\begin{proof}  
Note that for any $\lambda>0$, $\Delta_X^{(c)}(R)< (C+\lambda)R$ for every $R\in (\frac{D}{\lambda},\infty)$.
Pick any $t>1$, any $0<\varepsilon<1$, any $\lambda>0$, and any point $O\in X$.
Let $K=\frac{2t(C+\lambda)}{1-\varepsilon}$.
Let $A=\left\{n\in \mathbb{Z}\ |\ t^n>\frac{D}{\lambda}\right\}$.
Then for each $n\in A$ there is a point-finite cover $\mathcal{U}_n=\{U_{n,\tau }\}_{\tau < \kappa }$ of $X$ (one can take $|\mathcal{U}_n|\leq \kappa$ by Lemma \ref{covercardinality}) such that $\mathcal{L}(\mathcal{U}_n)\geq t^n$ and $\mathrm{diam}(\mathcal{U}_n)\leq (C+\lambda)t^n$.  
For each $n\in A$ and $\tau < \kappa$, let $V_{n,\tau}=U_{n,\tau}\backslash B_{(C-1+\lambda)t^n/2}(O)$ and define $f_{n,\tau}\colon X \to \mathbb{R}^+$ by \[f_{n,\tau}(x)=K\min\left\{d_X(x,V_{n,\tau}^c), \frac{t^n}{2}\right\}\] for each $x\in X$.  
Then for each $n\in A$ and $\tau < \kappa$, $f_{n,\tau}$ is Lipschitz with $\mathrm{Lip}(f)\leq K$ and bounded by $\frac{Kt^n}{2}$.  
Note that if $f_{n,\tau}(x)>0$, then $x\in V_{n,\tau}$ and so $x\notin B_{(C-1+\lambda)t^n/2}(O)$.
Thus $f_{n,\tau}$ is supported on the complement of $B_{(C-1+\lambda)t^n/2}(O)$.  
Therefore, by the bound on $f_{n,\tau}$ and the point-finiteness of $\mathcal{U}_n$, for fixed $x\in X$ and $\eta >0$, the set \[\{(n,\tau) \in A\times \kappa\ |\ f_{n,\tau}(x)>\eta\}\] is finite.  
It follows that the map $f\colon X\to c_0^+(\kappa)$ defined by \[f(x)=\sum\limits_{(n,\tau)\in A\times \kappa} f_{n,\tau}(x)e_{n,\tau}\] for every $x\in X$ (where $\{e_{n,\tau}\}_{(n,\tau)\in A\times \kappa}$ is any enumeration of the standard basis of $c_0(\kappa)$) is a well-defined Lipschitz map with $\mathrm{Lip}(f)\leq K$.  
Now fix $x, y\in X$ such that $d_X(x,y)>(C+\lambda)\inf \{t^n\ |\ n\in A\}$ and $d_X(x,O)\geq d_X(y,O)$.
Let $n\in A$ be such that $(C+\lambda)t^n< d_X(x,y) \leq (C+\lambda)t^{n+1}$.  
Then by the triangle inequality, 
\[
d_X(x,O)  >  \frac{(C+\lambda)t^n}{2}  =  \frac{t^n}{2}+\frac{(C-1+\lambda)t^n}{2}
\]
and so $B_{t^n/2}(x)\cap B_{(C-1+\lambda)t^n/2}(O)=\emptyset$.
But $\mathcal{L}(\mathcal{U}_n)\geq t^n$, and so there is $\tau\in \kappa$ such that $B_{(1-\varepsilon)t^n/2}(x)\subseteq U_{n,\tau}$.
Therefore $f_{n,\tau}(x)\geq K\frac{(1-\varepsilon)t^n}{2}$.  Furthermore, 
\[
d_X(y,V_{n,\tau})  \geq  d_X(x,y)-\mathrm{diam}(V_{n,\tau}) 
 >  (C+\lambda)t^n-(C+\lambda)t^n 
 =  0
\]
and so $f_{n,\tau}(y)=0$.
Thus 
\begin{align*}
\|f(x)-f(y)\|  \geq  |f_{n,\tau}(x)-f_{n,\tau}(y)|
 & \geq  \frac{K (1-\varepsilon)t^n}{2} \\
 & =  \frac{K(1-\varepsilon)}{2(C+\lambda)t}(C+\lambda)t^{n+1} 
 \geq  d_X(x,y).
\end{align*}

And so, for every $x,y\in X$, \[d_X(x,y)-(C+\lambda)\inf \{t^n\ |\ n\in A\}\leq \|f(x)-f(y)\| \leq Kd_X(x,y).\]
Since $t>1$ and $0<\varepsilon<1$ were arbitrary, the theorem follows.
\end{proof}

\begin{corollary}
\label{characterize}
A metric space $(X,d_X)$ is coarse Lipschitz embeddable into $c_0(\kappa)$ for some cardinality $\kappa$ if and only if there are $C, D\in[0,\infty)$ such that $\Delta_X^{(c)}(R)\leq CR+D$ for all $R\in[0,\infty)$.  A metric space $(X,d_X)$ is bi-Lipschitz embeddable into $c_0(\kappa)$ for some cardinality $\kappa$ if and only if there is $C\in[0,\infty)$ such that $\Delta_X^{(c)}(R)\leq CR$ for all $R\in[0,\infty)$.
\end{corollary}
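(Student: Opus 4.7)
The plan is to observe that this corollary is just the packaging of Theorem \ref{lipschitz} with Proposition \ref{embed1} and Corollary \ref{c0corollary}; there is essentially nothing new to prove, only a small amount of bookkeeping.

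For sufficiency in both halves, I would start from the hypothesis $\Delta_X^{(c)}(R)\leq CR+D$ (respectively $\Delta_X^{(c)}(R)\leq CR$). Theorem \ref{lipschitz} was stated only for $C\in [1,\infty)$, so I first harmlessly replace $C$ by $\max(C,1)$ --- the bound still holds --- and then invoke the theorem with any fixed $\lambda>0$ to produce a coarse Lipschitz embedding of $X$ into $c_0^+(\kappa)\subseteq c_0(\kappa)$. In the bi-Lipschitz case with $D=0$, the ``if $D=0$, then it is possible to take $L=0$'' clause of the same theorem upgrades this to a genuine bi-Lipschitz embedding. The degenerate case $C<1,\ D=0$ falls under Lemma \ref{C<1}, which forces $X$ to be a single point and is therefore trivial.

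For necessity I would invoke Corollary \ref{c0corollary}, which records that $\Delta_{c_0(\kappa)}^{(c)}(R)=2R$ for every infinite $\kappa$, so that $Y=c_0(\kappa)$ satisfies the hypothesis of Proposition \ref{embed1} with $(C,D)=(2,0)$. Feeding any coarse Lipschitz embedding $f\colon X\to c_0(\kappa)$ into Proposition \ref{embed1} then yields constants $C',D'\in [0,\infty)$ with $\Delta_X^{(c)}(R)\leq C'R+D'$ for every $R$. If $f$ happens to be bi-Lipschitz, the last sentence of Proposition \ref{embed1} instead gives the sharper linear bound $\Delta_X^{(c)}(R)\leq 2\,\mathrm{dist}(f)\,R$, which is exactly what the bi-Lipschitz half of the corollary requires.

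I do not anticipate any real obstacle: the two directions simply cite Theorem \ref{lipschitz} and Proposition \ref{embed1} (via Corollary \ref{c0corollary}) respectively. The only points deserving a sentence each in the write-up are (a) the reduction to $C\geq 1$ needed to apply Theorem \ref{lipschitz}, and (b) the trivial handling of the case $C<1,\ D=0$ in the bi-Lipschitz statement.
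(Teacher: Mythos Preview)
Your proposal is correct and follows essentially the same route as the paper: necessity via Proposition \ref{embed1} together with the bound $\Delta_{c_0(\kappa)}^{(c)}(R)\leq 2R$ (the paper derives this from Proposition \ref{c0} and the $2$-Lipschitz embedding of $c_0(\kappa)$ into $c_0^+(\kappa)$, while you cite Corollary \ref{c0corollary} directly), and sufficiency via Theorem \ref{lipschitz}. Your extra care with the hypothesis $C\geq 1$ is welcome, though note that replacing $C$ by $\max(C,1)$ already handles the case $C<1,\ D=0$, so the separate appeal to Lemma \ref{C<1} is not actually needed.
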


\begin{proof}
Suppose first that $X$ coarse Lipschitz embeds into $c_0(\kappa)$.
By Proposition \ref{c0}, $\Delta_{c_0^+(\kappa)}^{(c)}(R)\leq R$ for all $R\in[0,\infty)$ and so $\Delta_{c_0(\kappa)}^{(c)}(R)\leq 2R$ for all $R\in[0,\infty)$ by Proposition \ref{embed1} since $c_0(\kappa)$ $2$-Lipschitz embeds into $c_0^+(\kappa)$.
Thus, the implication follows from Proposition \ref{embed1}.\newline
Conversely, if there are $C,D\in[0,\infty)$ such that $\Delta_X^{(c)}(R)\leq CR+D$ for all $R\in[0,\infty)$, then $X$ coarse Lipschitz embeds (bi-Lipschitz embeds if $D=0$) into $c_0^+(\mathrm{dens}(X))$ and hence into $c_0(\mathrm{dens}(X))$ by Theorem \ref{lipschitz}.
\end{proof}

Compare the above corollary to Pelant \cite{pelant2}, who shows the uniform Stone property characterizes uniform embeddability of a metric space into $c_0(\kappa)$ for some $\kappa$; and to Baudier and Deville \cite{baudier}, who show property $\pi(\lambda)$ characterizes good-$\lambda$-embeddability of a separable metric space into $c_0$ (see \cite{baudier} for the definitions).  Lemma \ref{easy} and Corollary \ref{characterize} together show that a metric space $X$'s uniform, coarse Lipschitz, and bi-Lipschitz embeddability into $c_0^+(\kappa)$ for some $\kappa$ can all be determined from the modulus $\Delta_X^{(c)}$.
In light of this, it is natural to ask whether a metric space's coarse embeddability into $c_0(\kappa)$ for some cardinality $\kappa$ can similarly be determined from $\Delta_X^{(c)}$.
Proposition \ref{embed1} shows that the coarse Stone property is at least a necessary condition.

\begin{corollary}
\label{equivalence}
Let $X$ be a normed linear space.  The following are equivalent:
\begin{enumerate}[(i)]
\item $X$ coarsely embeds into $c_0(\kappa)$.
\item $X$ bi-Lipschitz embeds into $c_0(\kappa)$.
\item $X$ uniformly embeds into $c_0(\kappa)$.
\end{enumerate}
\end{corollary}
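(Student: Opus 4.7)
The plan is to route everything through Corollary \ref{characterize}, which reduces bi-Lipschitz embeddability of $X$ into some $c_0(\kappa)$ to the linear bound $\Delta_X^{(c)}(R)\leq CR$. The implications (ii) $\Rightarrow$ (i) and (ii) $\Rightarrow$ (iii) are immediate, since any bi-Lipschitz embedding is automatically both a coarse embedding and a uniform embedding. The work is in proving (i) $\Rightarrow$ (ii) and (iii) $\Rightarrow$ (ii).

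For (i) $\Rightarrow$ (ii), I would start from a coarse embedding $f\colon X\to c_0(\kappa)$. By Corollary \ref{c0corollary}, $c_0(\kappa)$ has the coarse Stone property, so Proposition \ref{embed1} transfers this property to $X$. Because $X$ is a normed linear space, Lemma \ref{norm} (the implication (iii) $\Rightarrow$ (ii) in that lemma) upgrades the qualitative coarse Stone property to a genuine linear bound $\Delta_X^{(c)}(R)\leq CR$ for some $C\in[0,\infty)$. Corollary \ref{characterize} (equivalently, Theorem \ref{lipschitz} with $D=0$) then yields a bi-Lipschitz embedding of $X$ into $c_0(\mathrm{dens}(X))$. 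To land in the original target $c_0(\kappa)$, I would invoke Lemma \ref{prelim} to conclude $\mathrm{dens}(X)\leq \mathrm{dens}(c_0(\kappa))=\kappa$, and then compose with the canonical isometric inclusion $c_0(\mathrm{dens}(X))\hookrightarrow c_0(\kappa)$.

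For (iii) $\Rightarrow$ (ii), the scheme is parallel but uses the uniform analogues: a uniform embedding $X\to c_0(\kappa)$, together with Proposition \ref{embed2} and the uniform Stone property of $c_0(\kappa)$ (itself a consequence of its coarse Stone property via Lemma \ref{norm}), forces $X$ to have the uniform Stone property, which Lemma \ref{norm} again converts into $\Delta_X^{(c)}(R)\leq CR$. Corollary \ref{characterize} and Lemma \ref{prelim} then deliver a bi-Lipschitz embedding into $c_0(\kappa)$ as before. I do not expect any genuine obstacle: every ingredient is already established, and the only care needed is bookkeeping with cardinalities so that the final target is the same $c_0(\kappa)$ as the hypothesis. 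The structural point that makes the whole corollary go through is Lemma \ref{norm}, which, for normed spaces, erases the distinction between having the Stone property qualitatively and having a linear $\Delta_X^{(c)}$.
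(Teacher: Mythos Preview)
Your proposal is correct and follows essentially the same route as the paper: transfer the (coarse or uniform) Stone property to $X$ via Proposition~\ref{embed1} or Proposition~\ref{embed2}, upgrade it to a linear bound on $\Delta_X^{(c)}$ using Lemma~\ref{norm}, and then invoke Theorem~\ref{lipschitz} together with the density comparison to land back in $c_0(\kappa)$. The only cosmetic differences are that the paper organizes the implications as the cycle $(\mathrm{i})\Rightarrow(\mathrm{ii})\Rightarrow(\mathrm{iii})\Rightarrow(\mathrm{i})$ and cites Proposition~\ref{c0} plus the $2$-Lipschitz embedding $c_0(\kappa)\hookrightarrow c_0^+(\kappa)$ rather than Corollary~\ref{c0corollary} directly; one small caveat is that in your $(\mathrm{iii})\Rightarrow(\mathrm{ii})$ step Lemma~\ref{prelim} as \emph{stated} asks for $\lim_{t\to\infty}\rho_f(t)=\infty$, whereas a uniform embedding only guarantees $\rho_f(t)>0$---but the \emph{proof} of Lemma~\ref{prelim} uses nothing more than $\rho_f(a)>0$ for some $a>0$, so the conclusion $\mathrm{dens}(X)\leq\kappa$ still goes through.
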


\begin{proof}  
(i) $\Rightarrow$ (ii):  By Lemma \ref{prelim}, $\mathrm{dens}(X)\leq \kappa$.  
By Proposition \ref{c0}, Proposition \ref{embed1}, and the fact that $c_0(\kappa)$ bi-Lipschitz embeds into $c_0^+(\kappa)$, $X$ has the coarse Stone property.
By Lemma \ref{norm}, this means there is $C\in[0,\infty)$ such that $\Delta_X^{(c)}(R)\leq CR$ for all $R\in[0,\infty)$.
And so $X$ bi-Lipschitz embeds into $c_0(\kappa)$ by Theorem \ref{lipschitz}.  \newline
(ii) $\Rightarrow$ (iii):  Clear. \newline
(iii) $\Rightarrow$ (i):   By Proposition \ref{c0}, Proposition \ref{embed2}, and the fact that $c_0(\kappa)$ bi-Lipschitz embeds into $c_0^+(\kappa)$, $X$ has the uniform Stone property.
By Lemma \ref{norm}, this means there is $C\in[0,\infty)$ such that $\Delta_X^{(c)}(R)\leq CR$ for all $R\in[0,\infty)$.
And so $X$ bi-Lipschitz embeds (and therefore coarsely embeds) into $c_0(\kappa)$ by Theorem \ref{lipschitz}.
\end{proof}

Kalton \cite{kalton1} has shown that coarse/uniform/Lipschitz embeddability into $\ell_\infty$ are also equivalent notions for normed linear spaces.  So far $\ell_\infty$ and $c_0(\kappa)$ seem to be the only spaces known to have this property, and given that $c_0(\mathfrak{c})$ (where $\mathfrak{c}$ is the cardinality of the continuum) bi-Lipschitz embeds into $\ell_\infty$ (see \cite{aharoni2}), one might ask whether the $\ell_\infty$ case actually follows from the $c_0(\kappa)$ case.  That is, can one find a bi-Lipschitz embedding of $\ell_\infty$ into $c_0(\mathfrak{c})$?  Equivalently, does $\ell_\infty$ have the coarse (or uniform) Stone property?
Pelant \cite{pelant1} and Shchepin \cite{shchepin} have shown that $\ell_\infty(\Gamma)$ fails to have the uniform Stone property when $|\Gamma|$ is large enough, but to the author's knowledge, the minimal cardinality is unknown.

\section{Spaces with the coarse Stone property}

In this section, we show directly that certain classes of metric spaces have the coarse Stone property.  In each of the examples given, $C\in[0,\infty)$ is found such that $\Delta_X^{(c)}(R)\leq CR$ for all $R\in[0,\infty)$ and so one can use Theorem \ref{lipschitz} to estimate how well $X$ bi-Lipschitzly embeds into some $c_0^+(\kappa)$.  Recall that a metric space is called \emph{locally finite} if every bounded set is finite.

\begin{proposition}
\label{locallyfintite}
If $(X,d_X)$ is a locally finite metric space, then $\Delta_X^{(c)}(R)\leq R$ for all $R\in[0,\infty)$.  Consequently, every locally finite metric space $(2+\varepsilon)$-Lipschitz embeds into $c_0^+$ for all $\varepsilon>0$.
\end{proposition}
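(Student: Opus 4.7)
The plan is to produce, for each $R > 0$, an explicit point-finite cover witnessing $\Delta_X^{(c)}(R) \leq R$. The natural candidate is
\[
\mathcal{U}_R = \{F \subseteq X \ |\ F \neq \emptyset \text{ and } \mathrm{diam}(F) < R\}.
\]
This is visibly a cover of $X$ (singletons suffice), and by construction each element has diameter $< R$, so $\mathrm{diam}(\mathcal{U}_R) \leq R$. The Lebesgue number bound $\mathcal{L}(\mathcal{U}_R) \geq R$ comes essentially for free: any nonempty $E \subseteq X$ with $\mathrm{diam}(E) < R$ is itself a member of $\mathcal{U}_R$, and the empty set is contained in every member.

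The only place where local finiteness enters is in verifying that $\mathcal{U}_R$ is point-finite. Fix $x \in X$; if $x \in F \in \mathcal{U}_R$, then the diameter condition forces $F \subseteq B_R(x) \cup \{x\}$, which is finite since bounded sets in $X$ are finite. Hence $\{F \in \mathcal{U}_R\ |\ x \in F\}$ is a collection of subsets of a finite set and is itself finite. This establishes $\Delta_X^{(c)}(R) \leq R$ for $R > 0$; the case $R = 0$ is handled by the cover of singletons, which is point-finite with diameter $0$.

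For the embedding consequence, I would apply Theorem \ref{lipschitz} with $C = 1$ and $D = 0$: given $\varepsilon > 0$, choose $\lambda = \varepsilon/3$, which makes $2(C + \lambda) = 2 + 2\varepsilon/3 < 2 + \varepsilon$, so $K = 2 + \varepsilon$ is admissible and $L$ may be taken to be $0$. The theorem then yields a $(2+\varepsilon)$-Lipschitz embedding into $c_0^+(\mathrm{dens}(X))$. Any nonempty locally finite metric space is countable (since $X = \bigcup_{n \in \mathbb{N}} B_n(x_0)$ is a countable union of finite sets for any fixed $x_0 \in X$); hence $\mathrm{dens}(X) \leq \aleph_0$ and the target is $c_0^+$.

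There is no real obstacle to this argument. The one modest observation required is to put \emph{every} small-diameter subset into the cover, which collapses the Lebesgue number check to a triviality; local finiteness is then exactly the hypothesis that keeps this over-generous cover point-finite.
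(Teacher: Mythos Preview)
Your argument is correct and essentially identical to the paper's own proof: the paper uses exactly the cover $\mathcal{U}=\{U\subseteq X\mid \mathrm{diam}(U)<R\}$, observes $\mathcal{L}(\mathcal{U})\geq R$ tautologically, and derives point-finiteness from $U\subseteq B_R(x)$ together with local finiteness. Your explicit handling of the trivial case $R=0$ and of the density-character bound for the embedding consequence adds detail the paper leaves implicit, but the approach is the same.
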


\begin{proof}
Fix $R\in[0,\infty)$.
Let $\mathcal{U}=\{U\subseteq X\ |\ \mathrm{diam}(U)< R\}$.
Then $\mathcal{U}$ is a cover of $X$ such that $\mathcal{L}(\mathcal{U})\geq R$ and $\mathrm{diam}(\mathcal{U})\leq R$.
Now take any $x\in X$ and suppose $x\in U$.
Then $d(x,y)< R$ for all $y\in U$, and so $U\subseteq B_R(x)$.
But $|B_R(x)|<\infty$ since $X$ is locally finite.
Thus, since there are only $2^{|B_R(x)|}<\infty$ many $U\subseteq X$ such that $U\subseteq B_R(x)$, there are only finitely many $U\in \mathcal{U}$ such that $x\in U$.
This means $\mathcal{U}$ is point-finite, and thus, $\Delta_X^{(c)}(R)\leq R$.
\end{proof}

Note that Proposition \ref{locallyfintite} actually recovers the best distortion for embedding the class of locally finite metric spaces into $c_0^+$ (found by Kalton and Lancien \cite{kalton2}).  The author does not know whether the same bound holds for $\Delta_X^{(c)}$ when $X$ is an arbitrary \emph{proper} metric space (that is, a metric space whose balls are all relatively compact).

\begin{proposition}
\label{separable}
If $(X,d_X)$ is a separable metric space, then $\Delta_X^{(c)}(R)\leq 2R$ for all $R\in[0,\infty)$.  Consequently, every separable metric space $(4+\varepsilon)$-Lipschitz embeds into $c_0^+$ for all $\varepsilon>0$.
\end{proposition}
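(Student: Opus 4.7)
The plan is to adapt the proof of Proposition \ref{locallyfintite} by replacing the naive ``all small-diameter subsets'' cover (which fails to be point-finite when $X$ is not locally finite) with a cover indexed by a countable dense sequence in $X$, suitably trimmed so that each point lies in only finitely many pieces. Concretely, I will fix $R \in [0, \infty)$ and a small $\delta > 0$, pick a countable dense sequence $(x_n)_{n \in \mathbb{N}}$ in $X$, and define
\[
U_n = B_{R + \delta}(x_n) \setminus \bigcup_{m < n} B_\delta(x_m)
\]
for each $n \in \mathbb{N}$. The key idea is that shaving off the earlier small balls $B_\delta(x_m)$ forces each $x \in X$ to lie only in those $U_n$ whose index does not exceed the least $n_0(x)$ with $d(x, x_{n_0(x)}) < \delta$.

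Next I will verify that $\mathcal{U} = \{U_n\}_{n \in \mathbb{N}}$ is a point-finite cover of $X$ with $\mathcal{L}(\mathcal{U}) \geq R$ and $\mathrm{diam}(\mathcal{U}) \leq 2R + 2\delta$. The covering and point-finiteness properties follow from the observation about $n_0(x)$ above, and the diameter bound is immediate from $U_n \subseteq B_{R+\delta}(x_n)$. For the Lebesgue number, given $A \subseteq X$ with $\mathrm{diam}(A) < R$, I will set $n_A = \min\{n : d(y, x_n) < \delta \text{ for some } y \in A\}$; the triangle inequality together with $\mathrm{diam}(A) < R$ forces $A \subseteq B_{R+\delta}(x_{n_A})$, while the minimality of $n_A$ guarantees $A \cap B_\delta(x_m) = \emptyset$ for every $m < n_A$, so $A \subseteq U_{n_A}$. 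This yields $\Delta_X^{(c)}(R) \leq 2R + 2\delta$, and sending $\delta \to 0$ gives the first assertion.

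The consequence for embeddings will then follow by applying Theorem \ref{lipschitz} with $C = 2$, $D = 0$, and any $\lambda \in (0, \varepsilon/2)$: this produces a bi-Lipschitz embedding of $X$ into $c_0^+(\aleph_0) = c_0^+$ with distortion bounded by any $K > 2(C+\lambda) = 4 + 2\lambda$, in particular $K = 4 + \varepsilon$. I expect the main subtlety to be the Lebesgue number verification: one must pick $n_A$ as the least index for which \emph{some} point of $A$ lies within $\delta$ of $x_{n_A}$, rather than first fixing a representative of $A$, so that the minimality argument genuinely rules out intersections with the excised balls $B_\delta(x_m)$ for $m < n_A$.
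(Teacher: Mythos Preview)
Your proof is correct and takes essentially the same approach as the paper: the cover $U_n = B_{R+\delta}(x_n)\setminus\bigcup_{m<n}B_\delta(x_m)$ is, up to a reparametrization of the radii, exactly the paper's cover $U_n = B_{r/2}(x_n)\setminus\bigcup_{j<n}B_\varepsilon(x_j)$, and your Lebesgue-number argument via the minimal index $n_A$ is the same as the paper's. The only cosmetic difference is that the paper bounds $\Delta_X^{(u)}(r)\geq r/2$ and then invokes Lemma~\ref{lineartype}, whereas you bound $\Delta_X^{(c)}(R)\leq 2R+2\delta$ directly and let $\delta\to 0$.
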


\begin{proof}  
Take any $r>0$ and any $0<\varepsilon <\frac{r}{2}$.
Let $\{x_n\}_{n=1}^\infty$ be a dense subset of $X$.
For each $n\in \mathbb{N}$, let $U_n  =  B_{\frac{r}{2}} (x_n)\backslash \bigcup\limits_{j=1}^{n-1}{B_{\varepsilon}(x_j)}$.
Then $\mathcal{U}=\{U_n\}_{n=1}^\infty$ is a cover of $X$ such that $\mathrm{diam}(\mathcal{U})\leq r$.
Now fix $x\in X$ and suppose $n\in \mathbb{N}$ is such that $d_X(x,x_n)< \varepsilon$.  
If $x\in U_j$, then $j\leq n$ by the way $\mathcal{U}$ was defined.
Therefore $x\in U_j$ for only finitely many $j\in \mathbb{N}$.
Thus, $\mathcal{U}$ is point-finite.
Now suppose $A\subseteq X$ is such that $\mathrm{diam}(A)< \frac{r}{2}-\varepsilon$.
Let $m=\min \{j\in \mathbb{N}\ | \ d_X(x_j,A)<\varepsilon\}$.
Then for each $y\in A$, $d_X(x_m,y)\leq d_X(x_m,A)+\mathrm{diam}(A)<\frac{r}{2}$ and $d_X(x_j,y)\geq \varepsilon$ for all $j<m$.
Thus $A\subseteq U_m$, and therefore $\mathcal{L}(\mathcal{U})\geq \frac{r}{2}-\varepsilon$.
Since $0<\varepsilon<\frac{r}{2}$ was arbitrary, $\Delta_X^{(u)}(r)\geq \frac{1}{2}r$ for all $r>0$.
By Lemma \ref{lineartype}, $\Delta_X^{(c)}(R)\leq 2R$ for all $R\in[0,\infty)$.
\end{proof}

Note that the 2 in Proposition \ref{separable} is optimal by Corollary \ref{c0corollary}.  At this point, it should be remarked that $\Delta_{\ell_p}^{(c)}(R)\geq \frac{(2^p+1)^{1/p}}{2}R$ for every $p\in[1,\infty)$ and $R\in[0,\infty)$.  This follows from Theorem \ref{lipschitz} and Kalton and Lancien \cite{kalton2}, who show that the best possible bi-Lipschitz embedding of $\ell_p$ into $c_0^+$ has distortion $(2^p+1)^{1/p}$.

\begin{definition}
A metric space $(T,d_T)$ is called an \emph{$\mathbb{R}$-tree} if it satisfies the following conditions:
\begin{enumerate}[(i)]
\item For any $s,t\in T$, there exists a unique isometric embedding $\phi_{s,t}\colon$ $[0,d_T(s,t)]\to T$ such that $\phi_{s,t}(0)=s$ and $\phi_{s,t}(d_T(s,t))=t$.
\item Any injective continuous mapping $\varphi \colon [0,1]\to T$ has the same range as $\phi_{\varphi(0),\varphi(1)}$.
\end{enumerate}
A \emph{rooted $\mathbb{R}$-tree} is an $\mathbb{R}$-tree $T$ paired with a point $t_0\in T$, and in this case $t_0$ is called the \emph{root} of $T$.
Given $t_1,t_2\in T$, a point $s\in T$ is said to be \emph{between} $t_1$ and $t_2$ if $s=\phi_{t_1,t_2}(x)$ for some $x\in [0,d_T(t_1,t_2)]$.
Given a nonempty subset $A$ of a rooted $\mathbb{R}$-tree $(T,t_0)$, a point $s\in T$ is called a $\emph{common ancestor}$ of $A$ if $s$ is between $t_0$ and $t$ for all $t\in A$, and is called the (necessarily unique) $\emph{last common ancestor}$ of $A$ if $s=\phi_{t_0,t} (\max \{x\in [0,d_T(t_0,t)]\ |\ \phi_{t_0,t}(x) \mbox{ is a common ancestor of } A\})$ for some $t\in A$.  One can think of an $\mathbb{R}$-tree as being a normal graph-theoretical tree with the edges ``filled in".

\end{definition}

\begin{proposition}
\label{tree}
If $(T,d_T)$ is an $\mathbb{R}$-tree (possibly non-separable), then $\Delta_T^{(c)}(R)\leq 2R$ for all $R\in[0,\infty)$.  Consequently, every $\mathbb{R}$-tree $(4+\varepsilon)$-Lipschitz embeds into $c_0^+(\kappa)$ for some $\kappa$ for all $\varepsilon>0$.
\end{proposition}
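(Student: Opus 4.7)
The plan is to mimic the separable-case cover (Proposition \ref{separable}), using the tree's geodesic structure in place of the dense sequence. Fix a root $t_0\in T$, write $h(t)=d_T(t,t_0)$, and recall that $\phi_{t_0,t}\colon[0,h(t)]\to T$ parameterizes the geodesic from $t_0$ to $t$. For a given $R>0$ and auxiliary $\varepsilon>0$, I define, for each $n\in\mathbb{Z}_{\geq 0}$ and each $s\in T$ with $h(s)=n\varepsilon$, the set
\[U_{n,s}=\{t\in T : s\in[t_0,t] \text{ and } d_T(s,t)<R+2\varepsilon\},\]
and take $\mathcal{U}$ to be the collection of all such $U_{n,s}$. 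That $\mathcal{U}$ covers $T$ is immediate: for $t\in T$ put $n=\lfloor h(t)/\varepsilon\rfloor$ and $s=\phi_{t_0,t}(n\varepsilon)$. Each $U_{n,s}$ lies in the open ball of radius $R+2\varepsilon$ around $s$, so $\mathrm{diam}(\mathcal{U})\leq 2R+4\varepsilon$. For point-finiteness, note that $t\in U_{n,s}$ forces $s=\phi_{t_0,t}(n\varepsilon)$ with $n\varepsilon\in[\max(0,h(t)-R-2\varepsilon),h(t)]$, leaving at most $\lfloor(R+2\varepsilon)/\varepsilon\rfloor+1$ admissible values of $n$.

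The heart of the argument is the Lebesgue number bound $\mathcal{L}(\mathcal{U})\geq R$. Given nonempty $A\subseteq T$ with $\mathrm{diam}(A)<R$, let $s_0$ denote the last common ancestor of $A$; for any fixed $t^*\in A$, the common ancestors of $A$ are exactly the points $\phi_{t_0,t^*}(x)$ with $x\in\bigl[0,\inf_{t\in A}h(\mathrm{lca}(t,t^*))\bigr]$, so $s_0$ is well-defined. Setting $n=\lfloor h(s_0)/\varepsilon\rfloor$ and $s=\phi_{t_0,s_0}(n\varepsilon)$, every $t\in A$ satisfies $s\in[t_0,s_0]\subseteq[t_0,t]$ and, along $[t_0,t]$,
\[d_T(s,t)=(h(s_0)-h(s))+d_T(s_0,t)<\varepsilon+R\leq R+2\varepsilon,\]
yielding $A\subseteq U_{n,s}$.

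The subtle step, which is my main technical obstacle, is the bound $d_T(s_0,t)\leq\mathrm{diam}(A)$ used in the display above; it must hold even when $A$ is infinite. My plan is a minimality argument: fix $t\in A$ and set $h^*=\inf_{t'\in A}h(\mathrm{lca}(t,t'))$, which lies in $[h(s_0),h(t)]$ because $s_0$ is an ancestor of every $\mathrm{lca}(t,t')$. If $h^*>h(s_0)$, then $\phi_{t_0,t}(h^*)$ is a common ancestor of $A$ of strictly greater height than $s_0$, contradicting the maximality in the definition of $s_0$; hence $h^*=h(s_0)$. Picking $t'\in A$ with $h(\mathrm{lca}(t,t'))$ arbitrarily close to $h(s_0)$ yields $d_T(t,t')\geq d_T(t,\mathrm{lca}(t,t'))=h(t)-h(\mathrm{lca}(t,t'))$ arbitrarily close to $d_T(s_0,t)$, forcing $d_T(s_0,t)\leq\mathrm{diam}(A)$.

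Combining the above, $\mathcal{U}$ is a point-finite cover with $\mathcal{L}(\mathcal{U})\geq R$ and $\mathrm{diam}(\mathcal{U})\leq 2R+4\varepsilon$, so $\Delta_T^{(c)}(R)\leq 2R+4\varepsilon$; letting $\varepsilon\to 0$ yields $\Delta_T^{(c)}(R)\leq 2R$. The Lipschitz consequence then follows from Theorem \ref{lipschitz} applied with $C=2$, $D=0$: given $\varepsilon>0$, choose $\lambda>0$ small enough that $2(2+\lambda)<4+\varepsilon$ to obtain a bi-Lipschitz embedding $T\to c_0^+(\mathrm{dens}(T))$ with $L=0$ and distortion less than $4+\varepsilon$.
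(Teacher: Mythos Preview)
Your proof is correct and follows essentially the same approach as the paper's: root the tree, place ``upward cones'' of radius roughly $R$ at every point whose height is a multiple of a small parameter ($\varepsilon$ for you, $1/n$ in the paper), and use the last common ancestor of a small set $A$ to locate a cone containing it. The only real difference is that you spell out in detail the inequality $d_T(s_0,t)\leq\mathrm{diam}(A)$, which the paper simply asserts.
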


\begin{proof}
Pick any $t_0\in T$ to be the root.
Fix $R\in[0,\infty)$ and take any $n\in \mathbb{N}$.
For each $t\in T$, let 
\[U_t=\left\{s\in T\ |\ t \mbox{ is between } t_0 \mbox{ and }s \mbox{ and } d_T(t,s)\leq R+\frac{1}{n}\right\}.\]
For each $m\in \mathbb{N}\cup \{0\}$, let $\mathcal{U}_m=\left\{U_t\ |\ d_T(t_0,t)=\frac{m}{n}\right\}$.
Let $\mathcal{U}=\bigcup\limits_{m=0}^{\infty}\mathcal{U}_m$.
Then for fixed $s\in T$, there are at most $n\lceil R\rceil+1$ many $U\in \mathcal{U}$ such that $s\in U$.
Thus $\mathcal{U}$ is a point-finite cover of $T$ such that $\mathrm{diam}(\mathcal{U})\leq 2(R+\frac{1}{n})$.
Now take any $A\subseteq T$ with $\mathrm{diam}(A)<R$.
Then if $t$ is the last common ancestor of all the points in $A$ and $t'=\phi_{t_0,t}(\max\{\frac{m}{n}\ |\ m\in \mathbb{N}\cup \{0\} \mbox{ and } \frac{m}{n}\leq d_T(t_0,t)\})$, then $A\subseteq U_{t'}\in \mathcal{U}$.
This means $\mathcal{L}(\mathcal{U})\geq R$.  
Thus, since $n\in \mathbb{N}$ was arbitrary, $\Delta_T^{(c)}(R)\leq 2R$ for all $R\in[0\infty)$.
\end{proof}

\begin{proposition}
\label{finitedim}
Given $N\in \mathbb{N}$, $\Delta_{\ell_\infty^N}^{(c)}(R)= R$ for all $R\in[0,\infty)$.
\end{proposition}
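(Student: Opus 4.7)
The plan is to adapt the proof of Proposition \ref{c0} to the finite-dimensional, two-sided setting; the finiteness of $N$ eliminates the need for the finite-support trick used there. Fix $R \in [0, \infty)$ and $n \in \mathbb{N}$. For each $x \in \mathbb{Z}^N$, define the box
\[U_x = \prod_{i=1}^N \left[\frac{x_i}{n}, \frac{x_i}{n} + 2R + \frac{1}{n}\right),\]
and let $\mathcal{U} = \{U_x : x \in \mathbb{Z}^N\}$. Clearly $\mathcal{U}$ covers $\ell_\infty^N$ and $\mathrm{diam}(\mathcal{U}) \leq 2R + \frac{1}{n}$. Point-finiteness is automatic here: a fixed $f \in \ell_\infty^N$ belongs to $U_x$ only if $x_i \in (nf_i - 2nR - 1, nf_i]$ for each of the $N$ coordinates, allowing at most $(2nR + 2)^N$ such tuples $x$.

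Next one verifies $\mathcal{L}(\mathcal{U}) \geq 2R$. Given a nonempty $A \subseteq \ell_\infty^N$ with $\mathrm{diam}(A) < 2R$, each coordinate projection satisfies $\sup \pi_i(A) - \inf \pi_i(A) \leq \mathrm{diam}(A) < 2R$. Setting $a_i = \inf \pi_i(A)$ and $x_i = \lfloor na_i \rfloor$ yields
\[\pi_i(A) \subseteq [a_i, a_i + 2R) \subseteq \left[\frac{x_i}{n}, \frac{x_i}{n} + 2R + \frac{1}{n}\right),\]
so $A \subseteq U_x \in \mathcal{U}$. Since $n \in \mathbb{N}$ was arbitrary, this gives $\Delta_{\ell_\infty^N}^{(c)}(2R) \leq 2R$, and substitution yields $\Delta_{\ell_\infty^N}^{(c)}(R) \leq R$ for all $R \in [0, \infty)$.

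The matching lower bound follows from Lemma \ref{C<1} exactly as in Proposition \ref{c0}: a strict inequality $\Delta_{\ell_\infty^N}^{(c)}(R_0) < R_0$ at some $R_0 > 0$ would, by the homogeneity of the modulus under scaling in a normed linear space, upgrade to $\Delta_{\ell_\infty^N}^{(c)}(R) \leq CR$ for all $R \geq 0$ with $C = \Delta^{(c)}(R_0)/R_0 < 1$, contradicting the infinitude of $\mathrm{diam}(\ell_\infty^N)$ via Lemma \ref{C<1}. The principal step is the Lebesgue number computation, which is not genuinely difficult: it reduces to the observation that an $\ell_\infty$-diameter bound on $A$ translates directly into the same oscillation bound in each coordinate, making the box-indexing go through cleanly.
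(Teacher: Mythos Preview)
Your proof is correct and follows essentially the same approach as the paper's: both construct a grid of overlapping boxes in $\ell_\infty^N$ indexed by $\mathbb{Z}^N$ with side length roughly $2R$ and mesh $\frac{1}{n}$, verify point-finiteness by a direct coordinatewise count, check $\mathcal{L}(\mathcal{U})\geq 2R$ via the coordinate-oscillation bound, and then obtain the lower bound from Lemma~\ref{C<1} together with the scaling in Lemma~\ref{norm}. The only cosmetic difference is that the paper carries this out for a single scale ($R=1$) and then invokes Lemma~\ref{norm} to propagate to all $R$, whereas you work with a general $R$ from the start; the content is the same.
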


\begin{proof}
Take any $n\in \mathbb{N}$.
For each $x\in \mathbb{Z}^N$, let 
\[U_x=\left\{f\in \ell_\infty^N\ |\ f(j)\in \frac{x_j}{n}+\left(-1,1+\frac{1}{n}\right)\right\}.\]
Then for fixed $f\in \ell_\infty^N$, there are at most $(2n+1)^N$ many $x\in \mathbb{Z}^N$ such that $f\in U_x$.  
Let $\mathcal{U}=\{U_x\ |\ x\in \mathbb{Z}^N\}$.
Then by the above, $\mathcal{U}$ is a point-finite cover of $\ell_\infty^N$ refined by $\{B_1(f)\}_{f\in \ell_\infty^N}$ such that $\mathrm{diam}(\mathcal{U})=2+\frac{1}{n}$.
Since every $A\subseteq \ell_\infty^N$ such that $\mathrm{diam}(A)<2$ is contained in a ball of radius $1$ (centered at $\left(\frac{\sup(\pi_j(A))-\inf(\pi_j(A))}{2}\right)_{j=1}^N$, where $\pi_j$ is the $j$-th coordinate functional), this means $\mathcal{L}(\mathcal{U})\geq 2$.  
Thus, since $n \in \mathbb{N}$ was arbitrary, $\Delta_{\ell_\infty^N}^{(c)}(2)\leq 2$.  
By Lemma \ref{C<1} and (the proof of) Lemma \ref{norm}, $\Delta_{\ell_\infty^N}^{(c)}(R)= R$ for all $R\in[0,\infty)$.
\end{proof}

\subsection*{Acknowledgements}
This research was partly supported by NSF grants DMS-1464713 and DMS-1565826.  
The author would like to thank Professors Th. Schlumprecht and F. Baudier for their encouragement and useful remarks.

\end{document}